\renewcommand{\bar}{\overline}
\newcommand{\ric}{\textrm{Ric}}
\newcommand{\rr}{\mathbb{R}}
\newfont{\fnt}{cmr10 scaled 550}
\newtheorem{theorem}{Theorem}
\newtheorem{lemma}{Lemma}
\newtheorem{cor}{Corollary}
\newtheorem{prop}{Proposition}
\newtheorem{definition}{Definition}
\newtheorem{example}{Example}
\theoremstyle{remark}
\newtheorem{remark}{Remark}
\font\strange=msbm10
\renewcommand{\epsilon}{\varepsilon}
\renewcommand{\Sigma}{\varSigma}
\newcommand{\R}{{{\mathchoice  {\hbox{$\textstyle{\text{\strange R}}$}}
{\hbox{$\textstyle{\text{\strange R}}$}}
{\hbox{$\scriptstyle  N\kern-0.3em  R$}}
{\hbox{$\scriptscriptstyle  R\kern-0.2em  R$}}}}}
\newcommand{\Z}{{{\mathchoice  {\hbox{$\textstyle{\text{\strange Z}}$}}
{\hbox{$\textstyle{\text{\strange Z}}$}}
{\hbox{$\scriptstyle  Z\kern-0.3em  Z$}}
{\hbox{$\scriptscriptstyle  Z\kern-0.2em  Z$}}}}}
\newcommand{\N}{{{\mathchoice  {\hbox{$\textstyle{\text{\strange N}}$}}
{\hbox{$\textstyle{\text{\strange N}}$}}
{\hbox{$\scriptstyle  N\kern-0.3em  N$}}
{\hbox{$\scriptscriptstyle  N\kern-0.2em  N$}}}}}
\begin{document}
\title[Eigenvalues of drifted Laplacian]{Eigenvalues of the drifted Laplacian on complete metric measure spaces}


 \subjclass[2000]{Primary: 58J50;
Secondary: 58E30, 53C42}

\thanks{The authors were partially supported by CNPq and Faperj of Brazil.}

\address{Instituto de Matematica, Universidade Federal Fluminense,
Niter\'oi, RJ 24020, Brazil}
\author{Xu Cheng}
\email{xcheng@impa.br}
\author[Detang Zhou]{Detang Zhou}

\email{zhou@impa.br}

\newcommand{\M}{\mathcal M}

\begin{abstract}  In this paper, first we study  a complete smooth metric measure space $(M^n,g, e^{-f}dv)$ with  the ($\infty$)-Bakry-\'Emery Ricci curvature   $\ric_f\ge \frac a2g$ for some positive constant $a$.  It is known that the spectrum of the drifted Laplacian $\Delta_f$ for $M$ is discrete and the first nonzero eigenvalue of $\Delta_f$ has lower bound $\frac a2$. We prove that  if  the lower bound    $\frac a2$   is achieved with multiplicity $k\geq 1$, then $k\leq n$,  $M$ is isometric to $\Sigma^{n-k}\times \mathbb{R}^k$  for some complete $(n-k)$-dimensional manifold $\Sigma$ and by passing an isometry, $(M^n,g, e^{-f}dv)$ must split off a gradient shrinking Ricci  soliton $(\mathbb{R}^k, g_{can}, \frac{a}{4}|t|^2)$, $t\in \mathbb{R}^k$. This result has an application to gradient shrinking Ricci solitons. Secondly, we  study the drifted Laplacian $\mathcal{L}$ for properly immersed self-shrinkers in the Euclidean space $\mathbb{R}^{n+p}$, $p\geq1$ and show 
the discreteness of the spectrum of $\mathcal{L}$ and  a logarithmic Sobolev inequality.
\end{abstract}

\maketitle
\baselineskip=18pt
\section{Introduction}
 The well-known  Lichnerowicz theorem  \cite{L} states that if the Ricci curvature of a closed, i.e., compact and without boundary,   Riemannian manifold $(M^n,g)$ of dimension $n\geq 2$ satisfies
$\text{Ric} \geq (n-1)a$, where $a$ is a positive constant, then the first nonzero eigenvalue of the Laplacian $\Delta$ satisfies $\lambda_1 \ge na$. Obata's Theorem \cite{O} states that equality holds if and only if the manifold is an $n$-dimensional sphere with constant sectional curvature $a$. Observe that if a complete Riemannian  manifold $M$ has Ricci curvature bounded from below by a positive constant, then $M$ must be compact.

One may ask whether a phenomenon corresponding to Lichnerowicz-Obata's theorem would happen for complete smooth metric measure spaces $(M^n,g, e^{-f}dv)$ with the ($\infty$)-Bakry-\'Emery Ricci curvature   $\ric_f:=\ric+\nabla^2f\ge \frac a2g$ for some positive  constant $a$.  Recall that a complete smooth metric measure space $(M^n,g,e^{-f}dv)$   is a complete  $n$-dimensional Riemannian manifold $(M^{n}, g)$ together with a weighted volume form $e^{-f}dv$ on $M$, where $f$ is a smooth function on $M$ and $d\nu$ the volume element induced by the metric $g$. For an $(M,g,e^{-f}dv)$,  a suitable operator on $M$ is the drifted Laplacian $\Delta_f=\Delta-\langle\nabla f, \nabla \cdot\rangle$, where $\Delta$ is the Laplacian on $M$. The reason is that $\Delta_f$ is  a densely defined self-adjoint operator in the space of square-integrable functions on $M$ with respect to the measure $e^{-f} d\sigma$, that is,  for $u, w\in C_0^{\infty}(M)$,
$$\int_M u\Delta_fwe^{-f}dv=-\int_M\langle\nabla u,\nabla w\rangle e^{-f}dv.$$
For  $(M,g, e^{-f}dv)$  with  $\ric_f\ge \frac a2g$  for some constant $a>0$,    it is known that $M$  is not necessary to be compact. One of  examples is the  Gaussian shrinking soliton $(\mathbb{R}^n, g_{can}, \frac{|x|^2}{4})$ with  the canonical Euclidean metric $g_{can}$, $f=\frac{|x|^2}{4}$, $x\in \mathbb{R}^k$ and $\ric_f=\frac 12g_{can}$.  Hence one must deal with complete manifolds including noncompact case, which is  different from Lichnerowicz-Obata's theorem.  
In 1985, Bakry-\'Emery \cite{BE} showed that, if $(M^n,g, e^{-f}dv)$ has $\ric_f\ge \frac a2g$ for some  constant $a>0$ and  finite weighted volume $\int_Me^{-f}dv $, then the following  logarithmic Sobolev inequality holds:
\begin{equation}\label{varphi-1.1}
\int_M u^2\log u^2e^{-f}dv\le \frac 4a\int_M|\nabla u|^2e^{-f}dv,
\end{equation}
 for all  $u\in C_0^{\infty}(M)$ satisfying $\int_Mu^2e^{-f}dv=\int_Me^{-f}dv$.

The  logarithmic Sobolev inequality \eqref{varphi-1.1}  together with the later work of  Morgan  \cite{M} and
Hein-Naber \cite{HN} 
leads  the  following  Lichnerowicz type theorem for $\Delta_f$ (see Section \ref{sec-embed} for details):

\begin{theorem}\label{th-eigen} (Bakry-\'Emery-Morgan-Hein-Naber)  Let $(M^n, g,e^{-f}dv)$ be a complete smooth metric measure space with $\ric_f\ge \frac a2g$ for some constant  $a>0$. Then 
\begin{itemize}
\item the spectrum of $\Delta_f$ for $M$ is discrete.
\item the first nonzero eigenvalue, denoted by  $\lambda_1(\Delta_f)$,  of $\Delta_f$ for $M$ is the spectrum gap of $\Delta_f$ and satisfies
\begin{equation}\label{ine-lambda-1}
\lambda_1(\Delta_f)\ge \frac a2.
\end{equation}
\end{itemize}
\end{theorem}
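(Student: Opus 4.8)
The plan is to get both assertions from three ingredients already present in the cited literature: a quadratic lower bound for $f$ forced by the curvature hypothesis, the Bakry--\'Emery logarithmic Sobolev inequality \eqref{varphi-1.1}, and a linearization of that inequality. \emph{Step 1 (quadratic growth of $f$; finite weighted volume).} Fix $p\in M$, put $r(x)=d(x,p)$, and along a unit-speed minimizing geodesic $\gamma$ from $p$ set $h(t)=f(\gamma(t))$. Then $\ric_f\ge\frac a2 g$ gives $h''(t)+\ric(\gamma'(t),\gamma'(t))\ge\frac a2$; inserting this into the index form of $\gamma$ with a well-chosen comparison field absorbs the ambient $\ric$ term (Morgan's comparison argument \cite{M}) and produces constants $c,C>0$ with $f(x)\ge c\,r(x)^2-C$ on $M$. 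Together with the weighted volume comparison this forces $\int_M e^{-f}dv<\infty$, so after rescaling we may assume $\int_M e^{-f}dv=1$; then the constants lie in $L^2(e^{-f}dv)$ and are eigenfunctions of $\Delta_f$ of eigenvalue $0$, which is the bottom of the spectrum because $-\int_M u\,\Delta_f u\,e^{-f}dv=\int_M|\nabla u|^2 e^{-f}dv\ge 0$, and on the connected $M$ this eigenvalue is attained only by constants.

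\emph{Step 2 (discreteness).} The quadratic growth of $f$ makes $\Delta_f$ resemble, near infinity, the Ornstein--Uhlenbeck operator of the Gaussian soliton, which has discrete spectrum. I would show that the inclusion $W^{1,2}(e^{-f}dv)\hookrightarrow L^2(e^{-f}dv)$ is compact: on relatively compact sets this is the classical Rellich theorem, and on an exterior region $M\setminus B_R(p)$ a Hardy/Poincar\'e-type inequality built from $f(x)\ge c\,r(x)^2-C$ controls the $L^2(e^{-f}dv)$-tail uniformly in $\|u\|_{W^{1,2}(e^{-f}dv)}$. Equivalently, via a Persson-type argument, $\inf\sigma_{\mathrm{ess}}(\Delta_f)\ge\lim_{R\to\infty}\inf\bigl\{\int_{M\setminus B_R}|\nabla u|^2 e^{-f}dv:\ u\in C_0^\infty(M\setminus B_R),\ \int_M u^2 e^{-f}dv=1\bigr\}=+\infty$. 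Hence $\Delta_f$ has compact resolvent, so its spectrum is a discrete sequence of finite-multiplicity eigenvalues diverging to $+\infty$; this is the contribution of Hein--Naber \cite{HN}.

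\emph{Step 3 (linearizing \eqref{varphi-1.1}).} Let $\phi\in C_0^\infty(M)$ with $\int_M\phi\,e^{-f}dv=0$. Since \eqref{varphi-1.1} extends from $C_0^\infty(M)$ to bounded $W^{1,2}(e^{-f}dv)$ functions (by cutting off and passing to the limit, using $f\ge c\,r^2-C$ to kill the tails), we may apply it to $u_\epsilon:=(1+\epsilon\phi)^{1/2}$, which for small $|\epsilon|$ is admissible and satisfies $\int_M u_\epsilon^2 e^{-f}dv=\int_M e^{-f}dv$. From $(1+x)\log(1+x)=x+\tfrac12 x^2+O(x^3)$ one gets $\int_M u_\epsilon^2\log u_\epsilon^2\,e^{-f}dv=\tfrac{\epsilon^2}{2}\int_M\phi^2 e^{-f}dv+O(\epsilon^3)$, while $|\nabla u_\epsilon|^2=\tfrac{\epsilon^2}{4}(1+\epsilon\phi)^{-1}|\nabla\phi|^2$ gives $\int_M|\nabla u_\epsilon|^2 e^{-f}dv=\tfrac{\epsilon^2}{4}\int_M|\nabla\phi|^2 e^{-f}dv+O(\epsilon^3)$. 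Dividing \eqref{varphi-1.1} by $\epsilon^2$ and letting $\epsilon\to0$ yields the Poincar\'e inequality $\int_M|\nabla\phi|^2 e^{-f}dv\ge\frac a2\int_M\phi^2 e^{-f}dv$, which passes to the whole mean-zero subspace by density. By the variational characterization of eigenvalues and Steps 1--2 ($0$ is the bottom eigenvalue, attained only by constants), $\lambda_1(\Delta_f)$ is the spectral gap and satisfies $\lambda_1(\Delta_f)\ge\frac a2$.

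I expect \emph{Step 2} to be the crux: converting the qualitative fact ``$f$ grows quadratically'' into an honest emptiness of the essential spectrum requires a uniform exterior estimate---a weighted Rellich--Kondrachov theorem or a careful Persson/IMS localization---and one must be careful because $M$ is allowed to be noncompact with its asymptotic geometry controlled only through $f$, not through two-sided bounds on $\ric$. Steps 1 and 3 are routine once the comparison geometry of $\ric_f$ and the inequality \eqref{varphi-1.1} are granted.
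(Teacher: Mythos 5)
Your Steps 1 and 3 track the paper: finiteness of $\int_M e^{-f}dv$ is quoted from Morgan, and the eigenvalue bound $\lambda_1(\Delta_f)\ge\frac a2$ is obtained exactly by converting the Bakry--\'Emery inequality \eqref{varphi-1.1} into a Poincar\'e inequality (the paper cites this implication from Ledoux and Guionnet--Zegarlinski rather than redoing the linearization, but your $u_\epsilon=(1+\epsilon\phi)^{1/2}$ computation is the standard proof of that cited fact). The problem is Step 2, which you yourself flag as the crux and which, as written, has a genuine gap. An exterior Poincar\'e/Persson estimate cannot be ``built from $f(x)\ge c\,r(x)^2-C$'' alone: a lower bound on $f$ controls neither the tail $\int_{M\setminus B_R}u^2e^{-f}dv$ for $u$ bounded in $H^1(M,\mu)$ nor the exterior bottom of the spectrum. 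The natural mechanism you are implicitly invoking is the ground-state transform $u=ve^{f/2}$, which turns $\int|\nabla u|^2e^{-f}dv$ into $\int|\nabla v|^2dv+\int Vv^2dv$ with $V=\frac14|\nabla f|^2-\frac12\Delta f$, and emptiness of the essential spectrum then requires $V\to+\infty$. But the hypothesis $\ric_f\ge\frac a2 g$ gives only a \emph{lower} bound $\nabla^2 f\ge\frac a2 g-\ric$, hence no upper bound on $\Delta f$ (there is no two-sided control on $\ric$), so $V$ need not tend to $+\infty$ and the Persson/IMS route does not close without substantial extra input.

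The paper sidesteps this entirely: discreteness is deduced from the logarithmic Sobolev inequality \eqref{varphi-1.1} together with $\mu(M)<\infty$ via Proposition \ref{sobo-prop-2}. The mechanism there is measure-theoretic rather than asymptotic-geometric: for a sequence $\{u_k\}$ bounded in $H^1(M,\mu)$, the inequality \eqref{varphi-1.1} gives a uniform entropy bound $\int_M u_k^2\log u_k^2\,d\mu\le\bar C$, which by the De La Vall\'ee Poussin criterion makes $\{u_k^2\}$ uniformly integrable; combined with a.e.\ convergence on a compact exhaustion (classical Rellich) and Vitali's convergence theorem, this yields $L^2(M,\mu)$-convergence and hence compactness of $H^1(M,\mu)\hookrightarrow L^2(M,\mu)$, equivalently discreteness of the spectrum. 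No pointwise asymptotics of $f$, $\nabla f$ or $\Delta f$ are needed beyond finiteness of the weighted volume. To repair your argument with minimal change, replace Step 2 by this uniform-integrability argument; you already have both inputs (the LSI and $\mu(M)<\infty$) in hand from Step 1.
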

In this paper,  first we study the rigidity of equality in \eqref{ine-lambda-1}. Observe that the lower bound $\frac a2$ can be achieved by some  $(M, g,e^{-f}dv)$, for instance:
\begin{example} 
Gaussian shrinking soliton $(\mathbb{R}^n,g_{can},\frac{|x|^2}{4})$.   $f=\frac{|x|^2}{4}$, $x\in \mathbb{R}^{n}$.  $\ric_f=\frac 12g_{can}$, $\lambda_1(\Delta_f)=\frac12$ with multiplicity $n$.
\end{example}
\begin{example} Cylinder shrinking solitons:

\noindent  
$\mathbb{S}^{n-k}(\sqrt{2(n-k-1)})\times\mathbb{R}^k$, $n-k\geq 2, k\geq 1$ with  the product metric  $g$ and $f=\frac{|t|^2}{4}, t\in \rr^k$. Here  $\mathbb{S}^{n-k}(\sqrt{2(n-k-1)}) $ is the $(n-k)$-dimensional round sphere of radius $\sqrt{2(n-k-1)} $. By a direct calculation,  $\ric_f=\frac 12g$, $\lambda_1(\Delta_f)=\frac12$ with multiplicity $k$.
\end{example}
In Section \ref{sec-eigen}, we  prove that
\begin{theorem}\label{eigen}Let $(M^n, g,e^{-f}dv)$ be a complete smooth metric measure space with $\ric_f\ge \frac a2g$ for some positive constant $a$. Then  the equality in \eqref{ine-lambda-1} holds and $\lambda_1(\Delta_f)=\frac a2$ with multiplicity $k\geq 1$ if and only if 
\begin{itemize}
\item $1\leq k\leq n$;
 \item $M$ is a noncompact manifold which is isometric to $\Sigma^{n-k}\times \rr^k$ with the product metric for some complete $(n-k)$-dimensional manifold
$(\Sigma, g_{\Sigma})$ satisfying that $\ric_f^\Sigma\ge\frac a2g_{\Sigma}$ and $\lambda_1( \Delta_f^{\Sigma})>\frac a2$;
 \item By passing an isometry, for $(x, t)\in \Sigma^{n-k}\times\rr^{k}$,
 $$f(x,t)=f(x,0)+\frac{a}{4}|t|^2.$$
 In the above 
$\ric_f^\Sigma$ and  $\lambda_1( \Delta_f^{\Sigma}) $ denote the Bakry-\'Emery Ricci curvature  of $\Sigma$  and the first nonzero eigenvalue  of the drifted Laplacian $\Delta_f$ on  $\Sigma$ respectively, where  the restriction of $f$ on $\Sigma$, still denoted by $f$, is defined as $f(x,0)$.
\end{itemize}

\end{theorem}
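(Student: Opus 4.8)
The backward direction (the listed splitting implies equality with multiplicity $k$) is straightforward: if $M=\Sigma^{n-k}\times\mathbb{R}^k$ with $f(x,t)=f(x,0)+\frac a4|t|^2$, then $\Delta_f$ splits as $\Delta_f^\Sigma\oplus\Delta_f^{\mathbb{R}^k}$, and on the Gaussian factor $\mathbb{R}^k$ the coordinate functions $t_1,\dots,t_k$ satisfy $\Delta_f t_i=-\frac a2 t_i$ and are $L^2(e^{-f})$; since $\lambda_1(\Delta_f^\Sigma)>\frac a2$, these $k$ functions span the $\frac a2$-eigenspace, giving multiplicity exactly $k$. So I will concentrate on the forward direction.

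Assume $\lambda_1(\Delta_f)=\frac a2$ with eigenspace of dimension $k\ge1$, and let $u$ be any eigenfunction, $\Delta_f u=-\frac a2 u$. The key tool is the weighted Bochner (Bakry-\'Emery) formula: for $v=|\nabla u|^2+\frac a2 u^2$,
\[
\tfrac12\Delta_f v=|\nabla^2 u|^2+\ric_f(\nabla u,\nabla u)+\langle\nabla u,\nabla\Delta_f u\rangle+\tfrac a2|\nabla u|^2.
\]
Using $\Delta_f u=-\frac a2 u$ and $\ric_f\ge\frac a2 g$, one gets $\frac12\Delta_f v\ge|\nabla^2 u|^2-\frac a2|\nabla u|^2+\frac a2|\nabla u|^2=|\nabla^2 u|^2$. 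Actually I want to run the standard integrated argument: integrate $|\nabla^2 u+\frac a2 u\,g|^2$ (a traceless-plus-trace decomposition) against $e^{-f}$, use $\int u\Delta_f^2 u\,e^{-f}=\int(\Delta_f u)^2 e^{-f}$ and $\int\ric_f(\nabla u,\nabla u)e^{-f}$; integrability (finite weighted volume, discreteness, hence all eigenfunctions and their first/second derivatives are in $L^2(e^{-f})$ by the a priori estimates underlying Theorem~\ref{th-eigen}) justifies the integration by parts with no boundary term. The equality case $\lambda_1=\frac a2$ forces, pointwise,
\[
\nabla^2 u=-\tfrac a2 u\,g,\qquad \ric_f(\nabla u,\nabla u)=\tfrac a2|\nabla u|^2 .
\]

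From $\nabla^2 u=-\frac a2 u\,g$ I extract the geometry. First, $|\nabla u|^2+\frac a2 u^2$ is constant (its gradient is $2\nabla^2u(\nabla u,\cdot)+a u\nabla u=0$), so after normalizing we may take $|\nabla u|^2+\frac a2 u^2\equiv c>0$; thus $u$ has no critical points where... wait, it can, but $\nabla u$ vanishes exactly on the max/min level sets. The standard argument (as in Tashiro / the Obata-type rigidity for $\nabla^2 u=-\frac a2 u\,g$ but now on a complete, possibly noncompact manifold) shows the gradient flow of $u$ generates a splitting: integral curves of $\nabla u/|\nabla u|$ are geodesics, $u$ along them is $\sqrt{2c/a}\,\sin(\sqrt{a/2}\,s+\varphi)$ type or — since $M$ is noncompact — purely the exponential/linear profile, giving a line $\mathbb{R}$ factor. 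Concretely, pick $k$ linearly independent eigenfunctions $u_1,\dots,u_k$; the vector fields $\nabla u_i$ are parallel after a linear change (from $\nabla^2 u_i=-\frac a2 u_i g$ one checks $\nabla(u_i\nabla u_j-u_j\nabla u_i)$-type combinations, or better: the functions $t_i$ defined by suitable affine combinations satisfy $\nabla t_i$ parallel and $\nabla^2(\frac a4|t|^2)=\frac a2 g$ on the span), hence span an integrable parallel distribution whose leaves are flat $\mathbb{R}^k$, and de~Rham's theorem splits $M=\Sigma^{n-k}\times\mathbb{R}^k$ isometrically. Tracking $f$: the soliton-type equation $\ric_f=\frac a2 g$ need not hold on all of $M$, but $\ric_f(\nabla u,\nabla u)=\frac a2|\nabla u|^2$ together with $\nabla^2 u=-\frac a2 u g$ and the Bochner identity applied again pins down $\nabla^2 f$ in the $\mathbb{R}^k$ directions, yielding $f(x,t)=f(x,0)+\frac a4|t|^2$ after choosing the origin (the center of the Gaussian) correctly — here I use $\ric=\ric_f-\nabla^2 f$ and that $\ric$ vanishes in the flat directions. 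Finally $k\le n$ is automatic, and $\lambda_1(\Delta_f^\Sigma)>\frac a2$ follows because any $\frac a2$-eigenfunction on $\Sigma$ would, pulled back, produce an extra eigenfunction on $M$ independent of the $t_i$, contradicting that the multiplicity is exactly $k$.

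The main obstacle is the \emph{completeness/noncompactness bookkeeping}: justifying the integration by parts in the Bochner argument with no boundary contribution (needs the $L^2(e^{-f})$ decay of eigenfunctions and $\nabla^2 u$, which I take from the spectral theory in Theorem~\ref{th-eigen} plus weighted elliptic estimates), and then upgrading the pointwise Hessian identity to a \emph{global} isometric splitting on a complete manifold — the gradient flow of $u$ must be complete and must actually sweep out a full $\mathbb{R}$ (not a circle or half-line), which is where noncompactness of $M$ and the explicit ODE $u''=-\frac a2 u$ along unit-speed geodesics, combined with $|\nabla u|^2+\frac a2 u^2=c$, are used to rule out closed or incomplete orbits. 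The rest is de~Rham decomposition and a short computation identifying the warping of $f$.
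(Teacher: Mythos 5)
Your backward direction and your overall architecture (Bochner formula, cut-off functions to justify integration by parts on a noncompact $M$, a parallel distribution plus de~Rham to split off $\rr^k$, and the product spectral decomposition to get $\lambda_1(\Delta_f^\Sigma)>\frac a2$) are the right ones, but the central rigidity identity you extract from the equality case is wrong, and the error propagates through your splitting argument. For an eigenfunction $\Delta_f u=-\lambda u$ the integrated weighted Bochner identity reads
\begin{equation*}
0=\int_M\Bigl(|\nabla^2u|^2+\ric_f(\nabla u,\nabla u)-\lambda|\nabla u|^2\Bigr)e^{-f}dv,
\end{equation*}
so with $\ric_f\ge\frac a2 g$ and $\lambda=\frac a2$ the \emph{entire} Hessian term must vanish: the equality case forces $\nabla^2u\equiv0$, not $\nabla^2u=-\frac a2u\,g$. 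There is no analogue here of the Lichnerowicz trace inequality $|\nabla^2u|^2\ge(\Delta u)^2/n$ that produces the Obata equation, because $\Delta u\neq\Delta_f u$; correspondingly the sharp constant is $\frac a2$ with no dimensional improvement (on the Gaussian soliton the first eigenfunctions are the linear coordinates, with vanishing Hessian). Worse, your equation $\nabla^2u=-\frac a2u\,g$ with $a>0$ would by Obata's theorem force $M$ to be a compact round sphere, which is incompatible with the noncompact splitting you are trying to produce; this is also why your ODE analysis is internally inconsistent --- solutions of $u''=-\frac a2u$ along unit-speed geodesics are sinusoidal, never the ``exponential/linear profile'' you invoke to manufacture a line factor.

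Once the identity is corrected to $\nabla^2u\equiv0$, the rest goes through essentially as you outline and as in the paper: each eigenfunction has a nontrivial parallel gradient (nontrivial because a $\frac a2$-eigenfunction cannot be constant), $k$ independent eigenfunctions give $k$ pointwise independent parallel fields (a parallel field vanishing at one point vanishes identically, and a constant combination of eigenfunctions is zero), hence $k\le n$ and the manifold splits as $\Sigma^{n-k}\times\rr^k$; moreover $\Delta u=\mathrm{tr}\,\nabla^2u=0$, so the eigenvalue equation reduces to $\langle\nabla f,\nabla u\rangle=\frac a2u$, which with $u$ linear in $t$ integrates directly to $f(x,t)=f(x,0)+\frac a4|t|^2$ --- you do not need the detour through the Ricci curvature in the flat directions. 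The integration-by-parts bookkeeping you flag is genuinely needed; the paper handles it with a cut-off argument that first shows $\int_M|\nabla^2u|^2e^{-f}dv<\infty$ and then that the error terms vanish as the cut-off radius goes to infinity.
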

Theorem  \ref{eigen} says that if the lower bound $\frac a2$ of the first nonzero eigenvalue $\lambda_1(\Delta_f)$ is achieved with multiplicity $k\geq 1$, then  $(M^n, g,e^{-f}dv)$, up to an isometry, must split off  a gradient   shrinking Ricci soliton $(\mathbb{R}^{k}, g_{can}, \frac{a}{4}|t|^2)$, $t\in \mathbb{R}^{k}$, $1\leq k\leq n$,   which is a Gaussian shrinking soliton by a scaling of function $f$.
The method to  Theorem \ref{eigen} is that:   the discreteness of the spectrum of $\Delta_f$ 
implies  the existence of solutions of eigenvalue problem of $\Delta_f$. Thus we may  use Bochner formula,   a technique taken to prove the Lichnerowicz-Obata theorem.  This way  also gives a different proof of the lower bound $\frac a2$ of $\lambda_1(\Delta_f)$ in Theorem \ref{th-eigen}.  Theorem \ref{eigen} can be applied to complete gradient shrinking Ricci solitons, namely,  $(M, g, f)$ with $\ric_f=\frac a2g$, where  $a$ is a positive constant. Therefore  we prove a splitting theorem for complete gradient shrinking Ricci solitons.

Later in Section \ref{sec-soliton}  we further discuss the case of  complete gradient shrinking Ricci solitons. We obtain an upper bound estimate of $\lambda_1(\Delta_f)$  for noncompact gradient shrinking Ricci solitons (Theorem \ref{th-soliton}). 

We would like to mention that  for closed Riemannian manifolds with $\text{Ric}_f\geq \frac a2g$  for some constant $a>0$,  Andrews-Ni \cite{BN}, and Futaki-Li-Li \cite{FLL} obtained  lower bound estimates for $\lambda_1(\Delta_f)$, which depend on the diameter of the manifolds. The authors of \cite{BN} stated, by constructing examples,  that their estimates are sharp when Bakry-\'Emery  Ricci curvature is not constant. See  details in \cite{FS}, \cite{BN} and \cite{FLL}.  On the other hand, the spectrum properties of $\Delta_f$ and the Laplacian $\Delta$ for complete smooth metric measure space $(M,g, e^{-f}dv)$ with various hypotheses on $\ric_f$ have been studied (cf \cite{CL},  \cite{Leo}, \cite{LZ}, \cite{MW1}, \cite{MW2}, \cite{MW3} and the references therein).  One interesting fact is that  contrary  to the drifted laplacian, the essential  spectrum of the Laplacian $\Delta$ for noncompact  gradient shrinking Ricci solitons is $[0,+\infty)$ (\cite{LZ}, \cite{CL}).

Secondly in  this paper, we study the case of complete self-shrinkers of  the mean curvature flows in  Euclidean space, whose  Bakry-\'Emery Ricci curvature  in general  may not be  bounded below by a positive constant. For a self-shrinker, the drifted Laplacian $\Delta_f$ with $f=\frac{|x|^2}{4}$
 is just the operator $\mathcal{L}=\Delta-\frac12\langle x, \nabla\cdot\rangle$ (see \cite{CM}),  where $x$ denotes the position vector in the ambient Euclidean space.  To study the discreteness of spectrum of $\mathcal{L}$, we need  Ecker's  logarithmic Sobolev inequality \cite{E} for self-shrinkers.  On the other hand, for a self-shrinker, Ding-Xin  \cite{DX} proved that its proper immersion implies Euclidean volume growth and hence finite weighted volume. In \cite{CZ}, we proved that finite weighted volume implies proper immersion. Hence for a self-shrinker, the proper immersion, Euclidean volume growth,  polynomial volume growth and finite weighted volume are equivalent each other.  Using Proposition \ref{sobo-prop-2}  together with the above facts, we  prove that  the spectrum of $\mathcal{L}$ is discrete for a properly immersed self-shrinker.  Moreover, the discreteness of spectrum let us obtain a logarithmic Sobolev inequality different from Ecker's. More precisely, we prove that 
\begin{theorem} \label{th-self-logsob}Let $M^n$ be a properly immersed complete $n$-dimensional self-shrinkers in the Euclidean space $\mathbb{R}^{n+p}$, $p\geq 1$. Then

1) the spectrum of $\mathcal{L}$ is discrete and consequently the first nonzero eigenvalue $\lambda_1$ of $\mathcal{L}$ has finite multiplicity and satisfies $\lambda_1\leq \frac12$;

2)  $M$ satisfies a  logarithmic Sobolev inequality with constants $(C',0)$:  for $u\in C_0^{\infty}(M)$,
\begin{align}\label{ine-ls-13}
&\int_{M}u^2\log  u^2e^{-\frac{|x|^2}{4}}dv-\left(\int_{M}u^2e^{-\frac{|x|^2}{4}}dv\right)\log\frac{\int_{M}u^2e^{-\frac{|x|^2}{4}}dv}{\int_Me^{-\frac{|x|^2}{4}}dv}\nonumber\\
&\qquad\qquad\qquad \leq C'\int_{M}|\nabla u|^2e^{-\frac{|x|^2}{4}}dv,
\end{align}
where 
\[ C'=4+\frac{3c(n)+n+1+\log\int_Me^{-\frac{|x|^2}{4}}dv}{\lambda_1}.\]
In the above, $c(n)$ denotes the constant in Ecker's logarithmic Sobolev inequality \eqref{ine-ls-11}, which depends only on $n$.
\end{theorem}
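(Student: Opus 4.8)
The plan is to split the proof into its two parts, using Part 1) to supply the spectral gap needed in Part 2). For Part 1), I would first invoke the equivalence --- due to Ding--Xin \cite{DX} and to the authors \cite{CZ} --- among proper immersion, Euclidean (hence polynomial) volume growth, and finiteness of the weighted volume $V:=\int_M e^{-|x|^2/4}dv$. Since $M$ is properly immersed, $V<\infty$ and Ecker's logarithmic Sobolev inequality \eqref{ine-ls-11} is available, so Proposition \ref{sobo-prop-2} applies and yields that the spectrum of $\mathcal L$ is discrete; in particular every eigenvalue has finite multiplicity and $\mathcal L$ has a genuine first nonzero eigenvalue $\lambda_1>0$. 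For the bound $\lambda_1\le\tfrac12$ I would exhibit eigenfunctions directly: for a constant vector $v\in\mathbb R^{n+p}$ set $\ell_v:=\langle x,v\rangle$, regarded as a function on $M$. Using $\Delta^M x=\vec H$, the self-shrinker equation $\vec H=-\tfrac12 x^{\perp}$, and $x=x^{\top}+x^{\perp}$, a one-line computation gives $\mathcal L\ell_v=\Delta^M\ell_v-\tfrac12\langle x,\nabla^M\ell_v\rangle=-\tfrac12\langle x^{\perp},v\rangle-\tfrac12\langle x^{\top},v\rangle=-\tfrac12\,\ell_v$. Since $|\nabla^M\ell_v|\le|v|$ and $V<\infty$, one has $\ell_v\in W^{1,2}_f$ with $\mathcal L\ell_v\in L^2_f$, so $\ell_v$ is a bona fide $L^2_f$-eigenfunction; as $M$ is not a point, some $\ell_v$ is non-constant, hence $\tfrac12$ is a nonzero eigenvalue of $\mathcal L$ and $\lambda_1\le\tfrac12$.

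For Part 2), the idea is to bootstrap Ecker's inequality \eqref{ine-ls-11} --- a \emph{defective} logarithmic Sobolev inequality, carrying an additive term proportional to $\int_M u^2 e^{-|x|^2/4}dv$ with a constant assembled from $c(n)$, $n$ and $\log V$ --- into the \emph{tight} inequality \eqref{ine-ls-13}, which has zero additive defect, at the price of a larger gradient coefficient. The engine is Rothaus's lemma: if $\mathrm{Ent}_\mu(u^2)\le A\int|\nabla u|^2\,d\mu+B\int u^2\,d\mu$ for all $u$ and the Poincar\'e inequality $\lambda_1\,\mathrm{Var}_\mu(u)\le\int|\nabla u|^2\,d\mu$ holds, then $\mathrm{Ent}_\mu(u^2)\le\big(A+\tfrac{B+2}{\lambda_1}\big)\int|\nabla u|^2\,d\mu$; one splits $u$ into its $\mu$-average $\bar u$ and the mean-zero part, uses $\mathrm{Ent}_\mu(u^2)\le\mathrm{Ent}_\mu((u-\bar u)^2)+2\,\mathrm{Var}_\mu(u)$, applies the defective inequality to $u-\bar u$, and absorbs the variance term through Poincar\'e. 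The needed Poincar\'e inequality, with constant $1/\lambda_1$, is precisely what Part 1) gives (discreteness forces $\lambda_1>0$, and $\ker\mathcal L$ consists of constants, so $\int_M|\nabla u|^2 e^{-|x|^2/4}dv\ge\lambda_1\big(\int_M u^2 e^{-|x|^2/4}dv-\tfrac{1}{V}(\int_M u\,e^{-|x|^2/4}dv)^2\big)$ for all $u\in C_0^\infty(M)$). It then remains to (i) rewrite Ecker's inequality in the normalized defective form above, which means tracking carefully the passage between the densities $(4\pi)^{-n/2}e^{-|x|^2/4}$ and $e^{-|x|^2/4}$ and between $\log u$ and $\log u^2$ --- this is where the numbers $3c(n)$, $n$, $1$ and $\log V$ collect and where $A=4$ emerges --- and (ii) substitute $A$ and $B$ into Rothaus's lemma to read off $C'=4+\frac{3c(n)+n+1+\log V}{\lambda_1}$.

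The mathematical substance is concentrated in Part 1), namely in Proposition \ref{sobo-prop-2}: that a logarithmic Sobolev inequality together with finite weighted volume makes $W^{1,2}_f\hookrightarrow L^2_f$ compact and hence the spectrum discrete; this I take as given from earlier in the paper. Granting it, the chief remaining difficulty is the purely bookkeeping task of matching the constants in Part 2), together with the small verification that $\ell_v$ really lies in the domain of the self-adjoint extension of $\mathcal L$ --- which holds because $\ell_v\in W^{1,2}_f$, $\mathcal L\ell_v\in L^2_f$, and $C_0^\infty(M)$ is dense in $W^{1,2}_f$ (good cutoffs exist on a properly immersed self-shrinker thanks to its polynomial volume growth), so $\ell_v$ belongs to the Friedrichs domain and, having eigenvalue $\tfrac12\neq0$, is automatically $L^2_f$-orthogonal to the constants.
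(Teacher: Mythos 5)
Your proposal follows essentially the same route as the paper's proof: the proper-immersion/finite-weighted-volume equivalence from \cite{CZ} and \cite{DX} plus Ecker's defective logarithmic Sobolev inequality feed Proposition \ref{sobo-prop-2} to give discreteness, the linear functions $\langle x,v\rangle$ (the paper's coordinate functions $x_i$) realize the eigenvalue $\tfrac12$, and the tight inequality is obtained by Rothaus-type tightening of the defective one through the Poincar\'e inequality with constant $\lambda_1$, exactly as the paper does by citing \cite{GZ}, Theorem 4.9. The only discrepancy is cosmetic: the version of Rothaus's lemma you state (correct for constants written without the factor of $2$ of Definition 1) yields $4+\frac{3c(n)+n+2+\log\mu(M)}{\lambda_1}$ rather than the paper's $4+\frac{3c(n)+n+1+\log\mu(M)}{\lambda_1}$, a factor-of-two bookkeeping wobble in the normalization of the constants $(C,D)$ that is already present in the paper's own identification of the constants of \eqref{ine-ls-12}.
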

\begin{remark}\label{rema-entropy}
 We would like to mention that  $\mathbb{R}^n$ has the least entropy of all self-shrinkers and hence the least weighted volume $\int_{\mathbb{R}^n}e^{-\frac{|x|^2}{4}}dv=(4\pi)^{\frac n2}$ (see its proof in Section \ref{sec-self}). Hence  $\log \int_Me^{-\frac{|x|^2}{4}}dv>0$ and  $C'>0$.
\end{remark}

 In Section \ref{sec-self}, we also give a condition on the second fundamental form under which  self-shrinkers are properly immersed and obtain an application  of Theorem   \ref{th-eigen} (see Theorem \ref{th-self-1}). In particular, if the principal curvatures $\eta_i, i=1,\ldots, n,$  of a self-shrinker hypersurface $M^n$ in $\mathbb{R}^{n+1}$ satisfy $\displaystyle\sup_{1\leq i\leq n}\eta_i^2\leq\delta<\frac 12$  for some nonnegative constant $\delta$, then it is properly immersed (see Corollary \ref{cor-self}).

The rest of this paper is organized as follows:  In Section \ref{sec-embed}, as a preparation,  we give the proof of  Theorems \ref{th-eigen} and  \ref{discrete};  In Section \ref{sec-eigen} we prove Theorem \ref{eigen}; In Section \ref{sec-soliton} we prove Theorem \ref{th-soliton};  In Section 5 we prove  Theorems \ref{th-self-logsob} and \ref{th-self-1}; In Appendix, we prove Proposition \ref{sobo-prop-2} for the sake of completeness of proof.
\bigskip

\noindent {\bf Acknowledgment.} This work was done while the authors were visiting the Department of Mathematics, MIT.  The authors would like to thank Tobias Colding and William Minicozzi  II for invitation and interest in this work. The  authors are grateful to  Aaron Naber for giving them  the idea to prove Theorem \ref{discrete} and useful suggestions. Finally, the authors acknowledge the Brazilian's {\it Ci\^encias sem fronteiras} Program.

 \section{ Compact embedding of weighted Sobolev space and discreteness of spectrum } \label{sec-embed}
 
 In this section, we give notations and some known results as preparation.
 Let  the triple $(M^n,g,e^{-f}dv)$ be an $n$-dimensional  complete smooth metric measure space.   For convenience, here and thereafter, unless otherwise
specified, we denote by $\mu$ the measure induced by the weighted volume element $e^{-f}dv$, i.e.,  $d\mu:= e^{-f}dv$.  
 Let  $L^2(M, \mu)$  denote  the space of square-integrable functions on $M$ with respect to the measure $\mu$.  Let $H^1(M)$ denote the space of functions in $L^2(M, \mu)$ whose gradient is square-integral with respect to the measure $\mu$. $L^2(M, \mu)$ and $H^1(M, \mu)$ are Hilbert spaces with the following norms respectively:
\begin{equation*}
\begin{split}
\|u\|_{L^2(M,\mu)}:&=\left(\int_Mu^2d\mu\right)^{\frac12},\\
\|u\|_{H^1(M,\mu)}:&=\left(\int_M(u^2+|\nabla u|^2)d\mu\right)^{\frac12}.
\end{split}
\end{equation*}
When $M$ is complete, Sobolev space  $H^1(M, \mu)$ is  
  the closure of the space $C_0^{\infty}(M)$ of all compactly-supported  smooth functions on $M$, with respect to  the norm  in $H^1(M, \mu)$.  
  
 For $(M^n,g,e^{-f}dv)$, the spectrum gap of $\Delta_f$ is the following infimum: 
$$\displaystyle\inf\left\{\frac{\int_M|\nabla u|^2d\mu}{\int_Mu^2d\mu};  u\in H^1(M, \mu), u\not\equiv 0, \int_Mud\mu=0\right\}.$$
 
  $(M^n,g,e^{-f}dv)$ is said to satisfy a Poincar\'e inequality with constant $C$ if  there exists  some positive constant $C$ so that 
 \begin{equation} \label{ine-poincare}\int_{M}|\nabla u|^2d\mu\geq C\int_{M}u^2d\mu, 
\end{equation}
for all $ u\in H^1(M, \mu)$ with $ \int_Mud\mu=0$.

Clearly, if   $(M^n,g,e^{-f}dv)$ satisfies Poincar\'e inequality  \eqref{ine-poincare}, then the spectrum gap of $\Delta_f$ is not less than $C$.

\begin{definition}  Let  $(M^n,g,e^{-f}dv)$ be a complete smooth metric measure space   with $\mu(M)=\int_Me^{-f}dv<\infty$. $(M^n,g,e^{-f}dv)$ is said to satisfy 
 a logarithmic Sobolev  inequality  with constants $(C,D)$ if there exist some constants  $C\in (0,\infty), D\in [0,\infty)$ so that   
   \begin{align}\label{ine-ls-15}
\int_Mu^2\log (u^2)d\mu-&\left(\int_Mu^2d\mu\right)\log\left(\frac {\int_Mu^2d\mu}{\mu(M)}\right)\nonumber\\
&\leq 2C\int_M|\nabla u|^2d\mu+2D\int_Mu^2d\mu,
\end{align}
 for all $u\in H^1(M, \mu)$.
 
   When $D\neq 0$, \eqref{ine-ls-15} is called a defective  logarithmic Sobolev  inequality. 
\end{definition}

\eqref{ine-ls-15} is equivalent to that for all $u\in H^1(M, \mu)$ with $\int_Mu^2d\mu=\mu(M)$,
    \begin{align}\label{ine-ls-16}
\int_Mu^2\log (u^2)d\mu\leq 2C\int_M|\nabla u|^2d\mu+2D\int_Mu^2d\mu.
\end{align}
  
If $D=0$, \eqref{ine-ls-15} and  \eqref{ine-ls-16}  turn to the following inequalities respectively:
  \begin{align}\label{ine-ls-15-1}
\int_Mu^2\log (u^2)d\mu-&\left(\int_Mu^2d\mu\right)\log\left(\frac {\int_Mu^2d\mu}{\mu(M)}\right)\leq 2C\int_M|\nabla u|^2d\mu,
\end{align}
for all $u\in H^1(M, \mu)$; and
\begin{equation}\label{varphi}
\int_M u^2\log u^2d\mu\le 2C\int_M|\nabla u|^2d\mu,
\end{equation}
 for all $u\in H^1(M, \mu)$ satisfying $\int_Mu^2d\mu=\mu(M)$.
 
  $(M^n,g,e^{-f}dv)$ satisfies a logarithmic Sobolev  inequality \eqref{ine-ls-15}  if  and only if it holds for $u\in C_0^{\infty}(M)$.  The same conclusion holds for \eqref{ine-ls-16}, \eqref{ine-ls-15-1} and \eqref{varphi} respectively (cf Remark \ref{rema-1} in Appendix). 
  It is known that the following Proposition \ref{sobo-prop-2} holds. Since we couldn't find a suitable reference, for the sake of completeness of proof, we include its proof in Appendix.

\begin{prop}\label{sobo-prop-2} Let $(M, g,e^{-f}dv)$ be a complete smooth metric measure space with finite weighted volume  $\mu(M)=\int_Me^{-f}dv$. If   the  logarithmic Sobolev inequality \eqref{ine-ls-15} with constants $(C,D)$ holds on  $(M,g,e^{-f}dv) $, 
then the inclusion  $H^1(M, \mu)\subset L^2(M, \mu)$ is  compactly embedded and equivalently the spectrum of $\Delta_f$ for $M$ is discrete.
\end{prop}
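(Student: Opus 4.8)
The plan is to prove the stated equivalence by establishing the compact embedding $H^1(M,\mu)\hookrightarrow L^2(M,\mu)$; that this is equivalent to discreteness of the spectrum of $\Delta_f$ is the standard fact that, for the self-adjoint operator associated with the closed form $u\mapsto\int_M|\nabla u|^2\,d\mu$ whose form domain is $H^1(M,\mu)$, compactness of the inclusion of the form domain into $L^2(M,\mu)$ is equivalent to compactness of the resolvent. So it is enough to show that every sequence $\{u_j\}\subset H^1(M,\mu)$ with $\|u_j\|_{H^1(M,\mu)}\le 1$ has a subsequence converging in $L^2(M,\mu)$. First I would pass to a locally convergent subsequence: fixing $o\in M$ and writing $B_R=B_R(o)$, completeness makes each $\overline{B_{2R}}$ compact, so $f$ is bounded there and $\mu=e^{-f}\,dv$ is uniformly comparable to the Riemannian measure on $B_{2R}$; hence $H^1(B_{2R},\mu)$ and $L^2(B_{2R},\mu)$ coincide with the usual Sobolev and Lebesgue spaces with equivalent norms, and Rellich--Kondrachov together with a diagonal argument over $R=1,2,\dots$ yields a subsequence, still written $\{u_j\}$, that converges in $L^2(B_{2R},\mu)$ for every $R$.

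The crux is \emph{tightness}: $\displaystyle\sup_j\int_{M\setminus B_R}u_j^2\,d\mu\to 0$ as $R\to\infty$, and this is exactly where \eqref{ine-ls-15} is used. Rewriting \eqref{ine-ls-15} and using $t\log t\le 0$ for $t\in[0,1]$, any $u\in H^1(M,\mu)$ with $\int_M u^2\,d\mu\le 1$ satisfies
\[
\int_M u^2\log u^2\,d\mu\ \le\ 2C\int_M|\nabla u|^2\,d\mu+2D+\bigl|\log\mu(M)\bigr| ;
\]
on the other hand, for any $K>e$, splitting $M$ into $\{u^2>K\}$ and its complement and using $\log u^2>\log K$ on the former and the elementary bound $s\log s\ge-1/e$ on the latter gives
\[
\int_M u^2\log u^2\,d\mu\ \ge\ (\log K)\int_{\{u^2>K\}}u^2\,d\mu-\frac{\mu(M)}{e} .
\]
Combining, for such $u$ with moreover $\int_M|\nabla u|^2\,d\mu\le\Lambda$,
\[
\int_{\{u^2>K\}}u^2\,d\mu\ \le\ \frac{1}{\log K}\Bigl(\frac{\mu(M)}{e}+2C\Lambda+2D+\bigl|\log\mu(M)\bigr|\Bigr) .
\]
Now take a cutoff $\phi_R$ with $\phi_R\equiv 1$ on $B_R$, $\mathrm{supp}\,\phi_R\subset B_{2R}$, $|\nabla\phi_R|\le 2/R$, and set $w_{j,R}=(1-\phi_R)u_j$; then $\int_M w_{j,R}^2\,d\mu\le 1$ and $\int_M|\nabla w_{j,R}|^2\,d\mu\le 10$ for $R\ge 1$, while $w_{j,R}$ vanishes on $B_R$. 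Applying the last inequality with $u=w_{j,R}$ and $\Lambda=10$,
\[
\int_M w_{j,R}^2\,d\mu\ \le\ \int_{\{w_{j,R}^2>K\}}w_{j,R}^2\,d\mu+K\,\mu(M\setminus B_R)\ \le\ \frac{E}{\log K}+K\,\mu(M\setminus B_R),
\]
with $E=\frac{\mu(M)}{e}+20C+2D+|\log\mu(M)|$ independent of $j$ and $R$. Given $\varepsilon>0$, choose $K$ so that $E/\log K<\varepsilon/2$, then, using $\mu(M)<\infty$ (hence $\mu(M\setminus B_R)\to 0$), choose $R$ with $K\mu(M\setminus B_R)<\varepsilon/2$; since $\int_{M\setminus B_R}u_j^2\,d\mu\le\int_M w_{j,R}^2\,d\mu$, tightness follows.

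To finish, given $\varepsilon>0$ fix $R$ with $\sup_j\|(1-\phi_R)u_j\|_{L^2(M,\mu)}<\varepsilon$; since $\{u_j\}$ converges in $L^2(B_{2R},\mu)$ we have $\|\phi_R(u_j-u_k)\|_{L^2(M,\mu)}\le\|u_j-u_k\|_{L^2(B_{2R},\mu)}<\varepsilon$ for $j,k$ large, so
\[
\|u_j-u_k\|_{L^2(M,\mu)}\le\|\phi_R(u_j-u_k)\|_{L^2(M,\mu)}+\|(1-\phi_R)u_j\|_{L^2(M,\mu)}+\|(1-\phi_R)u_k\|_{L^2(M,\mu)}<3\varepsilon .
\]
Hence $\{u_j\}$ is Cauchy in $L^2(M,\mu)$ and converges; this is the compact embedding, and with it the discreteness of the spectrum of $\Delta_f$. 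The only step that is not purely formal is the passage, in the middle paragraph, from \eqref{ine-ls-15} --- which carries no geometric information about $M$ --- to tightness of an $H^1$-bounded sequence. The key realization is that the logarithmic Sobolev inequality bounds precisely the amount of $L^2$-mass an $H^1$-bounded function can carry on its super-level sets $\{u^2>K\}$, uniformly in $u$, whereas the complementary sublevel mass lies on a set of $\mu$-measure $\mu(M\setminus B_R)$, which is small because $\mu(M)<\infty$; splitting along $\{u^2>K\}$ and sending first $K\to\infty$ and then $R\to\infty$ exploits both facts.
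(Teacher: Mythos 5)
Your proof is correct, but it takes a genuinely different route from the paper's. The paper extracts an a.e.\ convergent subsequence via a compact exhaustion, then uses the logarithmic Sobolev inequality only to get a uniform bound on $\int_M u_k^2\log u_k^2\,d\mu$, invokes the De La Vall\'ee Poussin criterion with $Q(t)=t\log t$ to conclude that $\{u_k^2\}$ is uniformly integrable, and finishes with a Vitali-type convergence argument. You instead prove strong local $L^2$ convergence plus \emph{tightness at infinity}, the latter by applying \eqref{ine-ls-15} to the truncations $(1-\phi_R)u_j$ and splitting along super-level sets $\{u^2>K\}$. The quantitative content extracted from the LSI is the same in both arguments --- a bound on $\int u^2\log u^2$ forces the mass on $\{u^2>K\}$ to decay like $1/\log K$ --- but in the paper the complementary small-measure sets come from Egorov's theorem, while in yours they are the tails $M\setminus B_R$, small because $\mu(M)<\infty$. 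Your version is more explicit and avoids the measure-theoretic black boxes (De La Vall\'ee Poussin, Vitali, Egorov), at the cost of the cutoff bookkeeping; it also makes the concentration--compactness structure (local compactness $+$ no escape of mass to infinity) transparent. Two cosmetic points: the inequality ``$\int_{M\setminus B_R}u_j^2\,d\mu\le\int_M w_{j,R}^2\,d\mu$'' is reversed as written, since $w_{j,R}^2\le u_j^2$ pointwise; what you actually need and have is $\int_{M\setminus B_{2R}}u_j^2\,d\mu=\int_{M\setminus B_{2R}}w_{j,R}^2\,d\mu\le\int_M w_{j,R}^2\,d\mu$, and your final triangle-inequality step already uses $\|(1-\phi_R)u_j\|_{L^2(M,\mu)}$ directly, so nothing breaks. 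Also, Rellich--Kondrachov on a geodesic ball needs a word about boundary regularity (the cut locus can make $\partial B_{2R}$ irregular); either use interior compactness $H^1(\Omega')\hookrightarrow L^2(\Omega)$ for $\Omega\Subset\Omega'$, or exhaust by domains with smooth boundary as the paper does.
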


Now suppose that  $(M, g,e^{-f}dv)$ has $\ric_f\ge \frac a2g$ for some constant  $a>0$.   Recently it was obtained by Morgan  \cite{M}  (cf \cite{WW}) that $M$ has finite weighted volume $\mu(M)$. Hence with the logarithmic Sobolev inequality \eqref{varphi-1.1}  by Bakry-\'Emery and finiteness of the weighted volume $\mu(M)$ by Morgan, 
Proposition \ref{sobo-prop-2} implies 
  the following result obtained by Hein-Naber \cite{HN}:

\begin{theorem}\label{discrete}({Hein-Naber}) Let $(M, g,e^{-f}dv)$ be a complete smooth metric measure space with $\ric_f\ge \frac a2g$ for some positive  constant $a$. Then the inclusion $H^1(M, \mu)\subset L^2(M, \mu)$ is  compactly embedded and equivalently the spectrum of $\Delta_f$ for $M$ is discrete.
\end{theorem}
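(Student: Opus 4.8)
The plan is to deduce Theorem~\ref{discrete} from Proposition~\ref{sobo-prop-2}: once we know that $(M,g,e^{-f}dv)$ has finite weighted volume and satisfies a (possibly defective) logarithmic Sobolev inequality, that proposition gives at once the compact embedding $H^1(M,\mu)\subset L^2(M,\mu)$ and, equivalently, the discreteness of the spectrum of $\Delta_f$. So the real work is to verify those two hypotheses under the sole assumption $\ric_f\ge\frac a2 g$ with $a>0$. (One could instead follow Hein-Naber \cite{HN} directly; the route below is the one native to the present exposition.)

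First I would check that $\mu(M)=\int_M e^{-f}dv<\infty$. This is Morgan's theorem \cite{M} (see also \cite{WW}). The point is that, although $\ric_f\ge\frac a2 g$ does not force $M$ to be compact — the Gaussian shrinking soliton is a counterexample — a comparison argument along minimizing geodesics from a fixed point $p$, via the Bochner/Bakry-\'Emery formula, produces a Gaussian-type upper bound $e^{-f}\le C\,e^{-c\, d(p,\cdot)^{2}}$ (up to lower-order terms) on the weighted volume density, and such a density is integrable. I would simply invoke \cite{M} for this step.

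Second, with $\mu(M)<\infty$ established, the Bakry-\'Emery logarithmic Sobolev inequality \eqref{varphi-1.1} becomes available: for all $u\in C_0^\infty(M)$ with $\int_M u^2\,d\mu=\mu(M)$ one has $\int_M u^2\log u^2\,d\mu\le\frac 4a\int_M|\nabla u|^2\,d\mu$. This is exactly the normalized form \eqref{varphi} with $C=\frac 2a$, so $(M,g,e^{-f}dv)$ satisfies the logarithmic Sobolev inequality \eqref{ine-ls-15} with constants $(\frac 2a,0)$; the passage from $C_0^\infty(M)$ to all of $H^1(M,\mu)$ is the density statement recorded in Remark~\ref{rema-1}.

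These two facts are precisely the hypotheses of Proposition~\ref{sobo-prop-2}, so its conclusion applies and Theorem~\ref{discrete} follows. The only genuinely analytic ingredient — everything else being a citation — sits inside Proposition~\ref{sobo-prop-2}, namely the step that converts the logarithmic Sobolev inequality into a uniform tail estimate: for a sequence bounded in $H^1(M,\mu)$ one must control $\int_{\{|u|>L\}}u^2\,d\mu$ and show it tends to $0$ as $L\to\infty$ uniformly over the sequence; combining this with classical Rellich-Kondrachov compactness on a compact exhaustion of $M$ and a diagonal extraction yields an $L^2(M,\mu)$-convergent subsequence. I expect this tail argument to be the main (though essentially standard) obstacle, and since a convenient reference seems to be lacking it is carried out in the Appendix.
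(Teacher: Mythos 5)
Your proposal is correct and follows the same route as the paper: Morgan's finiteness of the weighted volume plus the Bakry--\'Emery logarithmic Sobolev inequality \eqref{varphi-1.1}, fed into Proposition~\ref{sobo-prop-2}, whose proof (uniform integrability from the entropy bound, Rellich on a compact exhaustion, diagonal extraction) is exactly the Appendix argument you sketch. Your normalization $(\tfrac2a,0)$ of the constants is the one consistent with the definition \eqref{ine-ls-15}, so nothing further is needed.
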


From Theorem \ref{discrete},  the spectrum of $\Delta_f$ is just the set of points which are both eigenvalues of $\Delta_f$  with finite multiplicity and the isolated points of the spectrum (cf \cite{G} Definition 10.1).  Since the weighted volume $\mu(M)$  is finite,   $0$ is the least eigenvalue with multiplicity one and nonzero constant functions are  the associated eigenfunctions. Thus the set of all eigenvalues of $\Delta_f$, counted with multiplicity,  is an increasing sequence $$0=\lambda_0(\Delta_f)< \lambda_1(\Delta_f)\leq\cdots$$ with $\lambda_i(\Delta_f)\to \infty$ as $i\to \infty$.  The variational characterization of $\lambda_i(\Delta_f)$ states that $\lambda_1(\Delta_f)$ is the spectrum gap of $\Delta_f$.
Moreover, there exists a countable orthonormal base $\{\psi_i\}$ of $L^2(M, \mu)$ so that each $\psi_i$ is an eigenvector of $\Delta_f$ associated with the eigenvalue $\lambda_i(\Delta_f)$.
Now we give the proof of Theorem \ref{th-eigen}. 
 
\noindent {\it Proof of Theorem \ref{th-eigen}.} From Theorem \ref{discrete}, the spectrum of $\Delta_f$ is discrete.  From the statements following Theorem \ref{discrete},  the first nonzero eigenvalue $\lambda_1(\Delta_f)$ is the spectrum gap of $\Delta_f$.
 We estimate  the lower bound of $\lambda_1(\Delta_f)$ as follows:  under the hypothesis of theorem, $M$ satisfies  that $\mu(M)<\infty$ and 
the  logarithmic Sobolev inequality \eqref{varphi-1.1} with constants $(\frac{4}{a},0)$. It is well known that  a  logarithmic Sobolev inequality \eqref{varphi} with constant $(C,0)$  implies a Poincar\'e inequality \eqref{ine-poincare} with constant $\frac{1}{C}$ (cf \cite{Le} Prop 2.1 or \cite{GZ} Th 4.9).
Hence the spectrum gap $\lambda_1(\Delta_f)\geq\frac a2$.   

\qed

\section{First nonzero eigenvalue estimate}\label{sec-eigen}

In this section we prove Theorem \ref{eigen}.  The proof also give another proof of Theorem \ref{th-eigen} without using Poincar\'e inequality.
\medskip

\noindent {\it Proof of Theorem \ref{eigen}}.  It suffices to consider, by a scaling of the metric $g$,  the case of $(M,g, e^{-f})$ with $\ric_f\geq \frac{1}{2}g$ (the conclusion for general case can be obtained by the relation between the original and scaled metrics).
 The discreteness of spectrum of $\Delta_f$ guarantees the existence of solutions of eigenvalue problem of $\Delta_f$.
Assume that $u$ is a nonconstant eigenfunction  of $\Delta_f$ corresponding to an eigenvalue $\lambda$, i.e., 
\begin{equation}\label{eq-eigen-1}\Delta_f u+\lambda u=0, \quad \int_Mu^2d\mu<\infty.
\end{equation}
It is known that $u\in H^1(M,\mu)\cap C^{\infty}(M)$. By $\ric_f\geq \frac{1}{2}g$,  \eqref{eq-eigen-1} and the weighted Bochner formula:
\[
\frac 12\Delta_f |\nabla u|^2=|\nabla^2 u|^2+\langle\nabla u, \nabla(\Delta_fu)\rangle+\ric_f(\nabla u,\nabla u\rangle,
\]
we have 
\begin{equation}\label{delta-f}
\frac12\Delta_f |\nabla u|^2\ge|\nabla^2 u|^2+(\frac 12-\lambda)|\nabla u|^2.
\end{equation}
If $M$ is compact, integrating \eqref{delta-f}, we have
\begin{equation}\label{compact}
\int_M|\nabla^2 u|^2d\mu+\int_M(\frac 12-\lambda)|\nabla u|^2d\mu\leq 0.
\end{equation}
If $M$ is noncompact,  we claim that \eqref{compact} also holds. Given a fixed point $p\in M$, let $B_R$ denote the geodesic sphere of $M$ of radius $R$ centered at $p$.  Let $\phi$  be the nonnegative cut-off function satisfying  that  $\phi$ is $1$ on $B_R$,  $|\nabla\phi|\leq 1$ on $B_{R+1}\setminus B_R$, and $\phi=0$ on $\Sigma\setminus B_{R+1}$. Multiplying \eqref{delta-f} by $\phi^2$ and then
integrating, we get 
\begin{equation}\label{ine-1}
\frac12\int_M\phi^2\Delta_f |\nabla u|^2d\mu\ge\int_M\phi^2|\nabla^2 u|^2d\mu+(\frac 12-\lambda)\int_M\phi^2|\nabla u|^2d\mu.
\end{equation}
On the other hand, by the weighted Green formula,
\begin{align}
\int_M\phi^2\Delta_f |\nabla u|^2d\mu&= -\int_M\langle\nabla\phi^2,\nabla|\nabla u|^2\rangle d\mu\nonumber\\
&= 4\int_M\phi\langle\nabla_{\nabla\phi}\nabla u,\nabla u\rangle d\mu\label{eq-1}\\
&= 4\int_M\phi (\nabla^2u)(\nabla\phi,\nabla u)d\mu.\nonumber
\end{align}
By $2ab\leq \epsilon a^2+\frac{b^2}{\epsilon}$,  for any $\epsilon>0$, 
\begin{align}
2\phi (\nabla^2u)(\nabla\phi,\nabla u)&= 2\displaystyle\sum_{i,j=1}^n\phi(\nabla^2u)_{ij}\phi_iu_j\nonumber\\
&\leq  \displaystyle\sum_{i,j=1}^n\big[\epsilon\phi^2(\nabla^2u)_{ij}^2+\frac{1}{\epsilon}\phi_i^2u_j^2\big]\label{ine-hession}\\
&=  \epsilon \phi^2|\nabla^2u|^2+\frac{1}{\epsilon}|\nabla\phi|^2|\nabla u|^2.\nonumber
\end{align}
In the above, the subscripts $i, j$ denote the covariant derivatives with respect to $e_i,e_j$ respectively, where $\{e_i\}$ denotes a local orthonormal frame on $M$. Substituting  \eqref{ine-hession} into \eqref{eq-1}, we have
\begin{equation}\label{delta-2}
\int_M\phi^2\Delta_f |\nabla u|^2d\mu\leq 2\epsilon \int_M\phi^2|\nabla^2u|^2d\mu+\frac{2}{\epsilon}\int_M|\nabla\phi|^2|\nabla u|^2d\mu.
\end{equation}
Combining \eqref{ine-1} and \eqref{delta-2}, it holds that
\begin{align*}
(1-\epsilon)\int_M\phi^2|\nabla^2u|^2d\mu&\leq  \frac{1}{\epsilon}\int_M|\nabla\phi|^2|\nabla u|^2d\mu+(\lambda-\frac 12)\int_M\phi^2|\nabla u|^2d\mu.
\end{align*}
Noting  $\int_M|\nabla u|^2<\infty$ and  letting $R\to \infty$ in the above inequality, by the monotone convergence theorem,  we have $\int_M|\nabla^2u|^2d\mu<\infty$. Furthermore,   observe that
\begin{align}\left |2\phi (\nabla^2u)(\nabla\phi,\nabla u)\right |&=2\left |\displaystyle\sum_{i,j=1}^n\phi(\nabla^2u)_{ij}\phi_iu_j\right|\nonumber\\
&\leq \epsilon \displaystyle\sum_{i,j=1}^n\phi^2(\nabla^2u)_{ij}^2|\phi_i|+\frac{1}{\epsilon}\displaystyle\sum_{i,j=1}|\phi_i|u_j^2\label{ine-est-1}\\
&\leq \epsilon \sqrt{n}\phi^2|\nabla^2u|^2|\nabla \phi|+\frac{\sqrt{n}}{\epsilon}|\nabla\phi||\nabla u|^2.\nonumber
\end{align}
So \eqref{ine-est-1} implies that
\begin{align}\left|\int_M\phi^2\Delta_f |\nabla u|^2d\mu\right |
&=\left |4\int_M\phi (\nabla^2u)(\nabla\phi,\nabla u)d\mu\right |\nonumber\\
&\leq 2\epsilon \sqrt{n}\int_M\phi^2|\nabla^2u|^2|\nabla \phi |d\mu+\frac{2\sqrt{n}}{\epsilon}\int_M|\nabla\phi||\nabla u|^2d\mu\label{ine-est-2}\\
&\leq 2\epsilon \sqrt{n}\int_{B_{R+1}\setminus B_R}|\nabla^2u|^2d\mu+\frac{2\sqrt{n}}{\epsilon}\int_{B_{R+1}\setminus B_R}|\nabla u|^2d\mu.\nonumber
\end{align}
Letting $R\to\infty$ in \eqref{ine-est-2}, the right-hand side convergences to zero.  Thus $$\displaystyle\lim_{R\to\infty}\int_M\phi^2\Delta_f |\nabla u|^2d\mu\to 0.$$
Letting $R\to \infty$ in \eqref{ine-1} and using the dominate convergence theorem, we have \eqref{compact} holds:
\begin{equation*}\label{noncompact}
\int_M|\nabla^2 u|^2d\mu+(\frac 12-\lambda)\int_M|\nabla u|^2d\mu\le 0.
\end{equation*}
So the claim holds.
Since $u$ is not constant, from \eqref{compact},   $ \lambda\ge \frac 12$. This implies that $ \lambda_1(\Delta_f)\ge \frac 12$, as in Theorem \ref{th-eigen}.

Now we consider the case of the equality. From the proof,    $ \lambda= \frac 12$ if and only if 
\begin{align}
\label{eq1}
\nabla^2 u&=0,\\
\ric_f(\nabla u,\nabla u)&=\frac 12|\nabla u|^2\\
\label{eq1-1}
\Delta_fu+\frac 12 u&=0, \quad \int_Mu^2d\mu<\infty.
\end{align}
By \eqref{eq1},  $\Delta u=0$ and hence by \eqref{eq1-1}, 
\begin{equation}\label{eq2}
-\langle\nabla f, \nabla u\rangle+\frac 12 u=0.
\end{equation}
Thus $u$ is a nonconstant harmonic function and $M$ must be noncompact. Moreover,  \eqref{eq1} together with the fact $u$ is not constant means that $\nabla u$ is a nontrivial parallel vector field and hence  implies that $M$ must be isometric to a product manifold $\Sigma^{n-1}\times \rr$ for some complete manifold $(\Sigma, g_{\Sigma})$. Besides  $u$ is constant on  the level set $\Sigma\times \{t\}, t\in \rr$.  Without lost of generality, suppose that  $\Sigma:=u^{-1}(\{0\})$.  By passing an isometry, we may assume that $M= \Sigma^{n-1}\times \mathbb{R}$. Take $(x,t)\in \Sigma^{n-1}\times\mathbb{R}$.   From (\ref{eq2}), $\frac{\partial f}{\partial t}=\frac{t}{2}$. So  
\begin{equation}\label{eq-f}
f(x, t)=\frac {t^2}{4}+f(x,0).
\end{equation}
From \eqref{eq-f}, for any vector field $X\in T\Sigma$ and the unit normal $\nu$ to $\Sigma$, it holds that
$$\nabla^2f(X, \nu)=0, \nabla^2f(\nu, \nu)=\frac12,$$
$$\nabla^2f(X, X)=(\nabla^{\Sigma})^2f(X,X).
$$
 \noindent Here and thereafter we denote still by $f$ the restriction of $f$ on the corresponding submanifolds, for instance, $f|_{\Sigma}$ by $f$.
Also the superscripts $\Sigma$, $\rr$ denote the corresponding geometric quantities of $\Sigma$ and $\rr$ respectively, for instance, $\nabla^{\Sigma}$ denotes the connection of $\Sigma$.
 By a direct computation,  the Ricci curvature satisfies that
 on $\Sigma$ $$\ric(X,X)=\ric^{\Sigma}(X,X), \quad \ric(\nu,\nu)=\ric(\nu,X)=0.$$ Therefore
\[\ric_f^\Sigma\ge \frac12g_{\Sigma}.
\]
By Theorem \ref{discrete}, the spectrum of $\Delta_f^{\Sigma}$ on $\Sigma$ for $L^2_f(\Sigma)$ is also discrete. By direct computation, we have the identity:
\begin{align}
\Delta_fu(x,t)&=\Delta_f^{\Sigma}u|_{\Sigma\times\{t\}}(x)+\Delta_f^{\rr}u|_{\{x\}\times \rr}(t)\nonumber\\
&=\Delta_f^{\Sigma}u|_{\Sigma\times\{t\}}(x)
+(\frac{d^2}{ dt^2}-\frac{t}{2}\frac{d }{dt})u|_{\{x\}\times \rr}(t),\label{eq-delta}
\end{align}
where by abuse of notations, $\Delta_f^{\Sigma}$ and $\Delta_f^{\rr}$ denote the drifted Laplacians of $\Sigma\times \{t\}$ and $\{x\}\times\rr$, which act on functions $u|_{\Sigma\times\{t\}}$ and $u|_{\{x\}\times \rr}$ respectively.
By the theory of functional analysis, the discreteness of the spectrum of  $\Delta_f^{\Sigma}$ implies that there exists a complete orthonormal system for space $L^2(\Sigma, e^{-f}d\sigma)$ consisting of eigenfunctions of $\Delta_f^{\Sigma}$, where $d\sigma$ is the volume element of $\Sigma$ induced by the metric $g_{\Sigma}$ of $\Sigma$.  Also, for the operator $\frac{d^2}{dt^2}-\frac{t}{2}\frac{d}{dt}, t\in \rr$, it is known that its spectrum  on $L^2(\rr,e^{-\frac{t^2}{4}}dt)$ is discrete and the so-called Hermite polynomials are  orthonormal eigenfunctions,  which form a complete orthonormal system for space $L^2(\rr, e^{-\frac{t^2}{4}}dt)$.  By these  facts and  \eqref{eq-delta},  one can verify  that the products of   the orthonormal eigenfunctions of  $\Delta_f^{\Sigma}$ and the orthonormal eigenfunctions of $\frac{d^2}{dt^2}-\frac{t}{2}\frac{d }{dt}$ are the eigenfunctions of $\Delta_f$ and, by a standard argument,  form a complete system for  space $L^2(M, e^{-f}dv)$.  Therefore the eigenvalues  $\sigma(\Delta_f)$  of $M$ counted with multiplicity are just the sums of the corresponding eigenvalues $\sigma(\Delta_f^{\Sigma})$ of $\Sigma$ and  $\sigma(\Delta_f^{\rr})$ of  $\rr$ counted with multiplicity. It is known that 
\[\sigma(\Delta_f^{\rr})=\{0, \frac12, 1, \frac32, \cdots\},
\] 
where the first nonzero eigenvalue $\frac12$ has multiplicity one.
Hence
\begin{equation}\label{sigma-1}
\sigma(\Delta_f^{M})=\{0, \frac12, \min\{\lambda_1(\Delta_f^{\Sigma}),1\},  \cdots\},
\end{equation}
where $\lambda_1(\Delta_f^{\Sigma})$ is the first nonzero eigenvalue of $\Delta^{\Sigma}_f$. 

To conclude the proof,  we claim that {\it if the multiplicity of $\lambda_1(\Delta_f)=\frac12$ is  $k$, then $M$ is isometric to $\Sigma^{n-k}\times\rr^k$ with $\ric_f^{\Sigma^{n-k}}\ge \frac12g_{\Sigma^{n-k}}$,
$\lambda_1(\Delta_f^{\Sigma^{n-k}})> \frac12$, and
\begin{equation}\label{eq-ff}
f(x_1,\ldots,x_{n-k}, t_1,\ldots, t_k)=f(x,0)+\frac{1}{4}\displaystyle\sum_{i=1}^kt_i^2,
\end{equation}
 where $(x,t)=(x_1,\ldots,x_{n-k}, t_1,\ldots, t_k)\in \Sigma^{n-k}\times\rr^{k}$. }
 
  In the following proof, we will use  $\Sigma^j$ with superscript $j$ to distinguish different $\Sigma$, whose dimension is $j$.  We will prove the claim by induction. 
  
  First suppose that the multiplicity $k=1$.  By the proof before, we know that $M=\Sigma^{n-1}\times \rr$, $\ric_f^{\Sigma^{n-1}}\ge \frac12g_{\Sigma^{n-1}}$ and $f$ is as in \eqref{eq-f}. From \eqref{sigma-1}, we know that $\lambda_1(\Delta_f^{\Sigma^{n-1}})>\frac12$. So the claim holds for $k=1$.
  
Next suppose that the conclusion of the claim holds for multiplicity $ k-1$ and  $\lambda_1(\Delta_f)=\frac12$ has  multiplicity $k$. Then we have that $M=\Sigma^{n-1}\times \rr$ with $\ric_f^{\Sigma^{n-1}}\ge \frac 12g$ and $f=\frac{t_k^2}{4}+f|_{\Sigma^{n-1}}$, where $t_k\in\rr$. By \eqref{sigma-1}, $\lambda_1(\Delta_f^{\Sigma^{n-1}})= \frac 12$ must have  multiplicity $k-1$. Hence by hypothesis of induction, $\Sigma^{n-1}=\Sigma^{n-k}\times\rr^{k-1}$ with $\ric_f^{\Sigma^{n-k}}\ge \frac 12g_{\Sigma^{n-k}}$, $\lambda_1(\Delta_f^{\Sigma^{n-k}})>\frac12$ and 
$$f|_{\Sigma^{n-1}}(x_1,\ldots,x_{n-k}, t_1,\ldots, t_{k-1})=f(x,0)+\frac{1}{4}\displaystyle\sum_{i=1}^{k-1}t_i^2,$$
 where $(x_1,\ldots,x_{n-k}, t_1,\ldots, t_{k-1})\in \Sigma^{n-k}\times\rr^{k-1}$. Thus $M=\Sigma^{n-k}\times\rr^{k}$ and $f$ is as \eqref{eq-ff}. So the conclusion of the claim holds for $k$. By induction,   the claim is proved. Therefore we prove the conclusion of theorem under the assumption that $\lambda_1(\Delta_f)$ has multiplicity $k\geq 1$.
 
  The inverse is a direct computation.
 Therefore  we complete the proof of theorem.
 
\qed

Theorem \ref{eigen} has the following corollaries.

\begin{cor}\label{cor-compact}Let $(M^n, g,e^{-f}dv)$ be a closed smooth metric measure space with $\ric_f\ge \frac a2g$, where constant $a$ is positive, then  the  first nonzero eigenvalue $\lambda_1(\Delta_f)$ of $\Delta_f$ satisfies $$\lambda_1(\Delta_f)>\frac a2.$$
\end{cor}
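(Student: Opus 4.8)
The plan is to deduce this immediately from Theorem \ref{eigen}. Since $M$ is closed, the spectrum of $\Delta_f$ is discrete (this is standard for compact manifolds, and is also covered by Theorem \ref{discrete}), so $\lambda_1(\Delta_f)$ is a genuine eigenvalue with some finite multiplicity $k\geq 1$, not merely an infimum. By Theorem \ref{th-eigen} — or equivalently by the Bochner computation carried out in the proof of Theorem \ref{eigen}, which does not use the Poincar\'e inequality — we already have $\lambda_1(\Delta_f)\geq \frac a2$.

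Now suppose, for contradiction, that $\lambda_1(\Delta_f)=\frac a2$. Then the equality case of Theorem \ref{eigen} applies with multiplicity $k\geq 1$, and it forces $M$ to be isometric to a product $\Sigma^{n-k}\times\rr^k$ with $k\geq 1$. But such a manifold is noncompact (indeed Theorem \ref{eigen} explicitly records that $M$ must be noncompact), contradicting the hypothesis that $M$ is closed. Hence the inequality is strict, $\lambda_1(\Delta_f)>\frac a2$, which is the assertion.

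Alternatively, one can bypass the full structure theorem and argue directly: if an eigenfunction $u$ realized $\lambda=\frac a2$, the Bochner argument in the proof of Theorem \ref{eigen} (the compact case, where one simply integrates \eqref{delta-f}) yields $\nabla^2 u=0$, so $\nabla u$ is a nontrivial parallel vector field on $M$; its integral curves are lines, forcing $M$ to be noncompact, again a contradiction. Either way there is essentially no obstacle; the only point worth noting is that discreteness of the spectrum — automatic here because $M$ is closed — is what guarantees that the lower bound $\frac a2$ is an actual eigenvalue, so that the rigidity of Theorem \ref{eigen} (or the Bochner equality analysis) can be invoked in the first place.
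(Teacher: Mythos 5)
Your proposal is correct and is exactly the argument the paper intends: Corollary \ref{cor-compact} is stated as an immediate consequence of Theorem \ref{eigen}, whose equality case forces $M$ to be noncompact (splitting off an $\rr^k$ factor), contradicting closedness. Your alternative direct Bochner argument is also valid but is just the compact-case portion of the proof of Theorem \ref{eigen} restated, so there is nothing genuinely different here.
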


\begin{cor}\label{cor3}Let $(M^n, g,e^{-f}dv)$ be a complete smooth metric measure space with $\ric_f\ge \frac a2g$, where constant $a$ is positive.  If 
$ \frac a2$ is the  first nonzero eigenvalue $\lambda_1(\Delta_f)$ with multiplicity  $ n-1$,
then $M$ is isometric to the Euclidean space $ \rr^n$ and $f$ can be expressed as 
\begin{equation}\label{eq-f-cor}f(x_1, \cdots, x_n)=\varphi(x_1)+\frac{a(x_2^2+\cdots +x_{n}^2)}4,
\end{equation}
where $\varphi$ is smooth function satisfying $\varphi''\ge \frac a2$. 

\end{cor}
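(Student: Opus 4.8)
The plan is to reduce everything to Theorem \ref{eigen} and then pin down the one–dimensional factor. First I would apply Theorem \ref{eigen} with $k=n-1$: the hypothesis says that $\lambda_1(\Delta_f)=\frac a2$ is attained with multiplicity exactly $n-1$, so $M$ is isometric to $\Sigma^{1}\times\rr^{n-1}$ with the product metric, where $(\Sigma,g_\Sigma)$ is a complete (connected, since $M$ is) $1$–dimensional Riemannian manifold with $\ric_f^\Sigma\ge\frac a2 g_\Sigma$ and $\lambda_1(\Delta_f^\Sigma)>\frac a2$, and, after passing to an isometry, $f(x,t)=f(x,0)+\frac a4|t|^2$ for $(x,t)\in\Sigma^{1}\times\rr^{n-1}$.

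Next I would identify $\Sigma$. A complete connected $1$–dimensional Riemannian manifold is isometric either to $\rr$ or to a circle $\mathbb{S}^1(r)$. In dimension one $\ric^\Sigma\equiv 0$, so the bound $\ric_f^\Sigma\ge\frac a2 g_\Sigma$ reads $(\nabla^\Sigma)^2 f=f''\ge\frac a2>0$ in an arc–length parameter. This rules out the circle: integrating $f''$ over $\mathbb{S}^1(r)$ gives $0=\int_{\mathbb{S}^1(r)}f''\,ds\ge\frac a2\,\mathrm{Length}(\mathbb{S}^1(r))>0$, a contradiction. Hence $\Sigma\cong\rr$, and therefore $M$ is isometric to $\rr\times\rr^{n-1}=\rr^n$ with the Euclidean metric.

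Finally I would read off the form of $f$. Writing $x_1$ for the coordinate on the $\Sigma\cong\rr$ factor and $x_2,\dots,x_n$ for the coordinates on $\rr^{n-1}$, the splitting of $f$ from Theorem \ref{eigen} becomes $f(x_1,\dots,x_n)=f(x_1,0,\dots,0)+\frac a4(x_2^2+\cdots+x_n^2)$. Setting $\varphi(x_1):=f(x_1,0,\dots,0)$ gives \eqref{eq-f-cor}; $\varphi$ is smooth because $f$ is, and the curvature identity on $\Sigma$ noted above, $\ric_f^\Sigma=\varphi''$, together with $\ric_f^\Sigma\ge\frac a2 g_\Sigma$, yields $\varphi''\ge\frac a2$.

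I do not expect a serious obstacle: the entire structural content is already contained in Theorem \ref{eigen}. The only genuinely new point is excluding the circle as the fiber $\Sigma$, which is the elementary observation that a strictly positive Bakry–\'Emery Ricci lower bound on a one–dimensional manifold forces $f''>0$, incompatible with compactness. The remaining care is purely bookkeeping — keeping track of the scaling used in the proof of Theorem \ref{eigen} so that the constant $a$ appears correctly in \eqref{eq-f-cor}, and making the coordinate identification $\rr\times\rr^{n-1}=\rr^n$ explicit.
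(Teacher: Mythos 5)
Your proof is correct and follows the same overall route as the paper's: apply Theorem \ref{eigen} with $k=n-1$, identify the one-dimensional factor $\Sigma$, and read off the form of $f$. The only point where you genuinely diverge is in excluding the circle as the fiber. The paper does this by invoking Morgan's result that $\ric_f\ge\frac a2 g>0$ forces the fundamental group $\pi_1(M)$ to be finite, which is incompatible with $M\cong\mathbb{S}^1\times\rr^{n-1}$. You instead observe that in dimension one $\ric^\Sigma\equiv 0$, so the conclusion $\ric_f^\Sigma\ge\frac a2 g_\Sigma$ supplied by Theorem \ref{eigen} reads $f''\ge\frac a2>0$ in an arc-length parameter, and integrating $f''$ over a compact circle would give $0\ge\frac a2\cdot\mathrm{Length}>0$, a contradiction. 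Your argument is more elementary and self-contained (no external topological input is needed for a one-dimensional fiber), while the paper's citation is shorter and is the tool one would reach for in higher-dimensional versions of the same question; for the present statement both are equally decisive. Your final identification $\varphi''=\ric_f^\Sigma\ge\frac a2$ also matches the paper's remark that on flat $\rr^n$ one has $\ric_f=\nabla^2 f$, so the bookkeeping of the constant $a$ is handled correctly.
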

\begin{proof}
From Theorem \ref{eigen}, $M$ is isometric to $\Sigma\times \mathbb{R}^{n-1}$. $\Sigma $  has dimension $1$. Meanwhile it is known by \cite{M}  that  the fundamental group $\pi_1(M)$ is finite. So   $\Sigma$ must be $\mathbb{R}$,  not a circle and  $M$ is isometric to $\mathbb{R}^n$. In this case,  $\ric_f=\nabla^2f$. Using the general expression of $f$ in Theorem \ref{eigen},  we obtain \eqref{eq-f-cor}   by direct computation.

\end{proof}

With Theorem \ref{eigen}, we may further estimate other eigenvalues:

\begin{cor}Let $(M, g,e^{-f}dv)$ be a complete smooth metric measure space with $\ric_f\ge \frac 12g$. Suppose that  the first nonzero eigenvalue $\lambda_1(\Delta_f)=\frac12$ with multiplicity $k$.  If  the splitting $M=\Sigma^{n-k}\times \rr^k$, $k\geq 1$,  satisfies that   $\Sigma $ is compact and $f$ is constant on $\Sigma$, then the next eigenvalue $\lambda_2(\Delta_f)$ of $\Delta_f$ on $M$ satisfies $ \lambda_2(\Delta_f)\ge \frac 12\frac {n-k}{n-k-1}.$ Moreover the equality holds if and only if $\Sigma$ is isometric to the round sphere $\mathbb{S}^{n-k}$ of radius $\sqrt{2(n-k-1)}$.
\end{cor}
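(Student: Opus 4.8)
The plan is to reduce the statement, via the structure theorem, to the Lichnerowicz--Obata theorem on the compact factor. Applying Theorem \ref{eigen} with $a=1$, the hypothesis forces $M=\Sigma^{n-k}\times\rr^k$ with the product metric, $\ric_f^\Sigma\ge\frac12g_\Sigma$, $\lambda_1(\Delta_f^\Sigma)>\frac12$, and $f(x,t)=f(x,0)+\frac14|t|^2$ for $(x,t)\in\Sigma^{n-k}\times\rr^k$. Since $f$ is constant on $\Sigma$, subtracting this constant from $f$ (which alters neither $\Delta_f$ nor its spectrum) we may assume $f\equiv\frac14|t|^2$; then $\Delta_f^\Sigma=\Delta^\Sigma$ and $\ric_f^\Sigma=\ric^\Sigma$, so $\ric^\Sigma\ge\frac12g_\Sigma$. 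Set $m:=n-k=\dim\Sigma$; since $\ric^\Sigma>0$ excludes $\dim\Sigma\le1$, we have $m\ge2$ (the degenerate cases $n-k\le1$ are in any case excluded by the fraction appearing in the statement).

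Next I would determine the spectrum of $\Delta_f$ on $M$ exactly as in the proof of Theorem \ref{eigen}, now iterating the tensor-product argument over the $k$ Euclidean factors. Since $\Delta^\Sigma$ and each one-dimensional operator $\frac{d^2}{dt_i^2}-\frac{t_i}{2}\frac{d}{dt_i}$ on $L^2(\rr,e^{-t_i^2/4}\,dt_i)$ admits a complete orthonormal system of eigenfunctions, the eigenvalues of $\Delta_f$ on $M$, counted with multiplicity, are exactly the sums $\lambda+\mu$ with $\lambda\in\sigma(\Delta^\Sigma)$, $\mu\in\sigma(\Delta_f^{\rr^k})=\{0,\tfrac12,1,\tfrac32,\dots\}$, where the eigenvalue $0$ of $\Delta_f^{\rr^k}$ is simple (constants) and $\tfrac12$ has multiplicity $k$ (the coordinates $t_1,\dots,t_k$). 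Hence $0$ is simple for $\Delta_f^M$, the next eigenvalue is $\tfrac12=0+\tfrac12$, with multiplicity exactly $k$ (using $\lambda_1(\Delta^\Sigma)>\tfrac12$ from Theorem \ref{eigen}), and the smallest eigenvalue strictly larger than $\tfrac12$ is
\[
\lambda_2(\Delta_f)=\min\bigl\{\,1,\ \lambda_1(\Delta^\Sigma)\,\bigr\},
\]
because every sum $\lambda+\mu$ other than those equal to $0$ or $\tfrac12$ is $\ge\min\{1,\lambda_1(\Delta^\Sigma)\}$, and this bound is attained by $(0,1)$ or by $(\lambda_1(\Delta^\Sigma),0)$.

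Finally I would invoke Lichnerowicz's theorem on $(\Sigma^m,g_\Sigma)$: from $\ric^\Sigma\ge\frac12g_\Sigma=(m-1)\cdot\frac{1}{2(m-1)}\,g_\Sigma$ it yields $\lambda_1(\Delta^\Sigma)\ge\frac{m}{2(m-1)}=\frac12\frac{n-k}{n-k-1}$, and since $m\ge2$ one has $\frac{m}{2(m-1)}=\frac12+\frac{1}{2(m-1)}\le1$; therefore $\lambda_2(\Delta_f)=\min\{1,\lambda_1(\Delta^\Sigma)\}\ge\frac12\frac{n-k}{n-k-1}$, which is the asserted inequality. For the rigidity: when $m\ge3$ we have $\frac{m}{2(m-1)}<1$, so $\lambda_2(\Delta_f)=\frac12\frac{n-k}{n-k-1}$ forces $\lambda_1(\Delta^\Sigma)=\frac{m}{2(m-1)}$, and Obata's theorem identifies $(\Sigma,g_\Sigma)$ with the round sphere of constant sectional curvature $\frac{1}{2(m-1)}$, i.e. of radius $\sqrt{2(n-k-1)}$; conversely $\mathbb{S}^{n-k}\bigl(\sqrt{2(n-k-1)}\bigr)\times\rr^k$ manifestly realizes equality. (In the borderline case $m=2$ one has $\frac{m}{2(m-1)}=1$ while Lichnerowicz already forces $\lambda_1(\Delta^\Sigma)\ge1$, so equality holds automatically, with $\mathbb{S}^2(\sqrt2)$ one instance.) I do not expect a genuine obstacle here: the one point needing care is the precise identification of $\lambda_2(\Delta_f)$ from the sumset, which is immediate once the tensor-product spectral decomposition of Theorem \ref{eigen} is in hand.
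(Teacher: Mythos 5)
Your proof is correct and follows essentially the same route as the paper: the tensor--product spectral decomposition from the proof of Theorem \ref{eigen} gives $\lambda_2(\Delta_f)=\min\{1,\lambda_1(\Delta^\Sigma)\}$, and Lichnerowicz--Obata applied to $(\Sigma,g_\Sigma)$ with $\ric^\Sigma\ge\frac12 g_\Sigma$ finishes the argument. Your remark on the borderline case $n-k=2$ is in fact sharper than the paper's proof, which overlooks that there $\min\{1,\lambda_1(\Delta^\Sigma)\}=1$ for \emph{every} admissible $\Sigma$ (e.g.\ $\Sigma=\mathbb{S}^2(1)$ gives $\lambda_1(\Delta^\Sigma)=2$ yet $\lambda_2(\Delta_f)=1=\frac12\frac{n-k}{n-k-1}$), so the ``only if'' direction of the rigidity assertion genuinely fails in that case and the corollary implicitly requires $n-k\ge 3$.
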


\begin{proof} Since  $f$ is constant on $\Sigma$, $\ric^\Sigma=\ric_f^\Sigma\ge \frac12g$ on $\Sigma$ and $n-k\ge 2$. Analogous to the proof of Theorem \ref{eigen}, the eigenvalues of $\sigma(\Delta_f)$  of $M$ are the sums of the corresponding eigenvalues  $\sigma(\Delta_f^\Sigma)$ of $\Sigma$ and  $\sigma(\Delta_f^{\rr^k})$ of $\rr^k$ with restricted $f$ on $\Sigma$ and $\mathbb{R}^k$ respectively. By  Theorem \ref{eigen}, we know that 
$$f=\displaystyle\sum_{i=1}^k\frac{t^2_i}{4}+f|_{\Sigma},$$
where $(t_{1},\ldots, t_k)\in \rr^{k}$.
In this case
$$\sigma(\Delta_f^{\rr^k})=\{0,\underbrace{\hbox{$\frac{1}{2},\ldots, \frac12$}}_{\hbox{k}},1,  \cdots\}.$$
\begin{equation}\label{set-eigen}\sigma(\Delta_f^{M})=\{0, \underbrace{\hbox{$\frac{1}{2},\ldots, \frac12$}}_{\hbox{k}}, \min\{\lambda_1(\Delta_f^{\Sigma}),1\},  \cdots\},
\end{equation}
where $\frac12$ has multiplicity $k$.
On the other hand, Note that  $\ric^\Sigma\ge \frac12g$ and  $f$ is constant on $\Sigma$. By Lichnerowicz theorem,  $$\lambda_1(\Delta_f^{\Sigma})=\lambda_1(\Delta^{\Sigma})\geq \frac{n-k}{2(n-k-1)}.$$ Observe that $\frac{n-k}{2(n-k-1)}\le 1$.  
\eqref{set-eigen} implies that $ \lambda_2(\Delta_f)\ge \frac 12\frac {n-k}{n-k-1},$
and by Obata theorem, the equality holds if and only if $\Sigma^{n-k}$ is isometric to the round sphere $\mathbb{S}^{n-k}$ of radius $\sqrt{2(n-k-1)}$.

\end{proof}

\section{Gradient shrinking Ricci soliton case}\label{sec-soliton}

Let $(M^n,g)$ be a Riemannian manifold and $f$ is a smooth function on $M$. The quadruple $(M,g,f,\rho)$
is called a {\it gradient shrinking Ricci soliton} if 
$$\ric_f=\rho g,$$
where  constant $\rho>0$. Theorem \ref{eigen} can be applied to obtain a splitting theorem for complete gradient shrinking Ricci solitons $\ric_f=\rho g$ when the lower bound $\rho$ of the first nonzero eigenvalue $\lambda_1(\Delta_f)$ can be achieved. In this section,  we will give an upper bound estimate of $\lambda_1(\Delta_f)$.

By a
scaling  of metric $g$ one can normalize $\rho=\frac{1}{2}$ so that
\begin{equation}\label{soliton-1}\ric_f=\frac{1}{2} g.
\end{equation}

It is known that \eqref{soliton-1} implies the following  identities about complete gradient
shrinking solitons.
\begin{equation}\label{sol-2}R+\Delta f=\frac n2,
\end{equation}
\begin{equation}\label{sol-3}R+|\nabla f|^2-f=C_0 
\end{equation} for some constant $C_0$. Here $R$
denotes the scalar curvature of $(M, g)$. 
We prove the following

\begin{theorem}\label{th-soliton} Let the quadruple $(M,g,f,\frac 12)$
be a complete noncompact gradient shrinking Ricci soliton. Then $1$ is an eigenvalue of $\Delta_f$ and a translation of  $f$  with some constant is an associated  eigenfunction. Moreover the first nonzero eigenvalue $\lambda_1(\Delta_f)$ of $\Delta_f$ for $M$ satisfies $\frac 12\le\lambda_1(\Delta_f)\le 1.$
\end{theorem}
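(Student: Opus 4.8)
The plan is to produce $1$ explicitly as an eigenvalue by exhibiting an eigenfunction, namely a suitable normalization of the potential $f$, and then to combine this with Theorem \ref{th-eigen} to sandwich $\lambda_1(\Delta_f)$. First I would compute $\Delta_f f$. Taking the trace of the soliton equation \eqref{soliton-1} gives $R+\Delta f=\tfrac n2$, which is \eqref{sol-2}. Next, applying $\Delta_f=\Delta-\langle\nabla f,\nabla\cdot\rangle$ to \eqref{sol-3}, i.e. to $R+|\nabla f|^2-f=C_0$, and using the standard soliton identity $\nabla R=2\,\ric(\nabla f,\cdot)$ together with the Bochner formula for $\tfrac12\Delta|\nabla f|^2$, one finds after cancellation that $\Delta_f f = \tfrac n2 - f + C_0'$ for an appropriate constant; equivalently, a direct route is to differentiate and use $\ric_f=\tfrac12 g$ to get $\Delta_f f=\tfrac n2-\big(R+|\nabla f|^2\big)+\cdots$, collapsing to
\begin{equation*}
\Delta_f f = -f + c
\end{equation*}
for some constant $c$ (here $c=\tfrac n2 - C_0$, with $C_0$ as in \eqref{sol-3}). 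Hence, setting $\tilde f := f - c$, we obtain $\Delta_f \tilde f = -\tilde f$, so $\tilde f$ is a formal eigenfunction of $\Delta_f$ with eigenvalue $1$.

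The remaining point is to check that $\tilde f$ is genuinely an $L^2(M,\mu)$-eigenfunction, i.e. that $\int_M \tilde f^2\, e^{-f}\,dv<\infty$ (and $\int_M|\nabla f|^2 e^{-f}dv<\infty$), so that $1$ lies in the discrete spectrum furnished by Theorem \ref{th-eigen}. For this I would invoke the well-known growth estimates for complete shrinking Ricci solitons due to Cao–Zhou: there are constants so that $f(x)$ grows like $\tfrac14 d(x,x_0)^2$ up to lower-order terms, and the weighted volume is finite with Euclidean-type volume growth, which forces all polynomial-in-$d(x,x_0)$ quantities — in particular $f^2$ and $|\nabla f|^2$ (note $|\nabla f|^2\le f + C_0$ from \eqref{sol-3} since $R\ge 0$ on shrinkers) — to be integrable against $e^{-f}dv$. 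Since $M$ is noncompact, $\tilde f$ is nonconstant (a constant $f$ would force $\ric=\tfrac12 g>0$, hence $M$ compact by Myers, a contradiction), so $\tilde f$ is a nontrivial eigenfunction and $1\in\{\lambda_1(\Delta_f),\lambda_2(\Delta_f),\dots\}$.

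Finally, the two-sided bound follows immediately: the lower bound $\lambda_1(\Delta_f)\ge\tfrac12$ is exactly \eqref{ine-lambda-1} of Theorem \ref{th-eigen} applied with $a=1$; and since $\lambda_1(\Delta_f)$ is, by the variational characterization, the smallest nonzero eigenvalue, the existence of the eigenvalue $1$ gives $\lambda_1(\Delta_f)\le 1$. This proves $\tfrac12\le\lambda_1(\Delta_f)\le 1$.

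\textbf{Main obstacle.} The only non-formal step is the integrability of $\tilde f$ (equivalently, confirming $\tilde f\in H^1(M,\mu)$), which I expect to be the crux: it requires the quadratic growth of $f$ and the finiteness of the weighted volume for complete noncompact shrinkers. I would cite the Cao–Zhou estimates (and Morgan's finiteness of $\mu(M)$, already used in the excerpt) rather than reprove them; once integrability is in hand, everything else is the short computation $\Delta_f f=-f+c$ plus Theorem \ref{th-eigen}.
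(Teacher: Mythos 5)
Your proposal is correct and follows essentially the same route as the paper: normalize $f$ using \eqref{sol-2}--\eqref{sol-3} so that $\Delta_f f=-f$, verify $f\in H^1(M,\mu)$ via the Cao--Zhou quadratic growth and Euclidean volume growth estimates together with $R\ge 0$, and then sandwich $\lambda_1(\Delta_f)$ between the lower bound from Theorem \ref{th-eigen} and the eigenvalue $1$. Your explicit remark that noncompactness forces $\tilde f$ to be nonconstant (via Myers) is a small point the paper leaves implicit, but otherwise the arguments coincide.
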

\begin{proof}
Without lost of generality, by adding the constant $C_0-\frac n2$ to $f$, by \eqref{sol-3}, we can assume that  $f$ satisfies
\begin{equation}\label{sol-4} R+|\nabla f|^2-f=\frac n2. 
\end{equation}
 Then  \eqref{sol-2} and \eqref{sol-4} imply
\begin{equation}\Delta_f f+f=0.
\end{equation}
 From \cite{MR2732975}, we know that for a fixed point $p\in M$ there exists a constant $C$ such that
\[\lim_{x\to +\infty}\frac{f(x)}{r^2(x)}=\frac 14.  
\]
and the volume $\textrm{vol}(B_p( r))\le Cr^n$, where $r(x)$ is the distance function from $p$ and $B_p(r)$ is the geodesic ball of radius $r$  centered at $p$.
These facts together with   $R\ge 0$ implies that  $f\in H^1(M, \mu)$. Thus $f$ is an eigenfunction associated the eigenvalue $1$. By Theorem \ref{th-eigen}, we complete the proof.

\end{proof}

\section{ Self-shrinkers in  Euclidean space }\label{sec-self}

In this section, we discuss the discreteness of spectrum of operator $\mathcal{L}$ and logarithmic Sobolev inequality for properly immersed self-shrinkers in the Euclidean space $\mathbb{R}^{n+p}$, $p\geq 1$. First we give some notations. Let $M^n$ be an $n$-dimensional submanifold immersed in $\mathbb{R}^{n+p}$. 
The second fundamental form $B$ of $M$ is defined by $$B(X,Y)=(\overline{\nabla}_XY)^{\perp}, \quad X, Y\in T_q M,\quad q\in M,$$
where  $\perp$ denotes the projection to the normal bundle of $M$. 
The mean curvature vector ${\bf H}$ of $\Sigma$ is defined by ${\bf H}=\text{tr}B=\displaystyle\sum_{i=1}^{n}(\overline{\nabla}_{e_{i}}e_{i})^{\bot}$, where $\{e_i\}, i=1,\ldots,n$, is a local  orthonormal frame on $M$.

$M$ is called a self-shrinker 
 if its  mean curvature vector ${\bf H}$ satisfies
\begin{equation}
{\bf H}=-\frac{x^{\bot}}{2},
\end{equation}
where $x$ denotes the position vector  in $\mathbb{R}^{n+p}$. 

Take $f=\frac{|x|^2}{4}$. The restriction of $f$ on $M$, still denoted by $f$, induces a measure $\mu$ satisfying $d\mu=e^{-\frac{|x|^2}{4}}dv$, where $dv$ is the volume element of $(M,g)$.
It is known  that  the self-shrinker $M$ is an $f$-minimal submanifold in $\rr^{n+p}$ (cf \cite{CMZ}).

Define the operator 
$\mathcal{L}:=\Delta-\frac12\langle x, \nabla\cdot\rangle,$
where $\nabla$ and $Delta$ denote the gradient and Laplacian on $M$ respectively. By definition, $\mathcal{L}=\Delta_f$. The operator $\mathcal{L}$ was introduced by Colding-Minicozzi \cite{CM} to study self-shrinker hypersurfaces.  Now we prove Theorem \ref{th-self-logsob}. 
\medskip

\noindent {\it Proof of Theorem \ref{th-self-logsob}}.
For self-shrinkers in $\mathbb{R}^{n+p}$, Ecker \cite{E} proved a logarithmic Sobolev inequality  with respect to the measure $\mu$.  Precisely,  in \cite{E}, Example 2.2-(3),  take  $\rho=\frac{1}{(4\pi)^{\frac n2}}\exp(-\frac{|x|^2}{4})$ and $\lambda=2$. Then  
for all $u\in C_0^{\infty}(M)$,
\begin{align}
&   \int_{M}u^2\log  u^2d\mu-\left(\int_{M}u^2d\mu\right)\log\left(\int_{M}u^2d\mu\right)\nonumber\\
&\qquad\qquad\qquad\leq 4\int_{M}|\nabla u|^2d\mu+\left[3 c(n)+n\right]\int_{M}u^2d\mu,\label{ine-ls-11}
\end{align}
where $c(n)$ is a  constant depending only on $n$.

 Note that by \cite{CZ} and  \cite{DX}, for a self-shrinker, proper immersion, finite weighted volume and polynomial volume growth are equivalent each other (see the definition of polynomial volume growth in \cite{CM}). By hypothesis of theorem,  the weighted volume $\mu(M)=\int_{M}e^{-\frac{|x|^2}{4}}dv<\infty$. Hence  \eqref{ine-ls-11} is equivalent to that  for all $u\in C_0^{\infty}(M)$,
\begin{align}\label{ine-ls-12}
&  \int_{M}u^2\log  u^2d\mu-\left(\int_{M}u^2d\mu\right)\log\frac{\int_{M}u^2d\mu}{\mu(M)}\nonumber\\
&\qquad\qquad\qquad\leq 4\int_{M}|\nabla u|^2d\mu+\bigg[3 c(n)+n+\log\mu(M)\bigg]\int_{M}u^2d\mu.
\end{align}
Observe that  \eqref{ine-ls-12} has constants $(C, D)$, where $C=4$, $D=3c(n)+n+\log\mu(M)$. Remark \ref{rema-entropy} in Introduction states that $\mu(M)\geq (4\pi)^{\frac{n}{2}}$ (see the proof of Remark \ref{rema-entropy} later in this section) and thus  $D>0$.
 By Proposition \ref{sobo-prop-2}, $H^1(M,\mu)\subset L^2(M,\mu)$ is  compactly embedded and equivalently the spectrum of $\mathcal{L}$  is discrete.  It holds that $0$ is the least eigenvalue with multiplicity one since the nonzero constants  are associated eigenfunctions. To estimate the upper bound of the nonzero eigenvalue $\lambda_1$ of $\mathcal{L}$,
note that
\begin{align}
\Delta x_i&
=\langle \Delta x, \partial_i\rangle
=\langle { \bf H}, \partial_i\rangle=-\frac12\langle  x^{\perp}, \partial_i\rangle.\nonumber
\end{align}
\begin{align}\label{eq-eigenf-1}
\Delta_fx_i&=\Delta x_i-\langle \frac{x^{\top}}{2}, \nabla x_i\rangle
=-\frac12\langle  x^{\perp}, \partial_i\rangle-\frac12\langle x^{\top}, \partial_i\rangle
=-\frac12x_i.
\end{align}
Observe that  there exists some $i$ so that $x_i\not\equiv 0$ on $M$. Since $M$ has the polynomial volume growth,  \eqref{eq-eigenf-1} implies $x_i$ is an eigenfunction.  Hence the first nonzero eigenvalue of $\Delta_f$ satisfies $\lambda_1\leq \frac12$.

By the variational principle of eigenvalues,  the Poincar\'e inequality with constant $\lambda_1$ holds, i.e., for all $ u\in H^1(M, \mu)$ with $ \int_{M}ud\mu=0$,
\begin{align}\label{ine-spectrum}
\int_{M}|\nabla u|^2d\mu\geq \lambda_1\int_{M}u^2d\mu.
\end{align}
  It is known that a defective logarithmic Sobolev inequality with constants $(C,D)$, $D>0$ and Poincar\'e inequality  with constant $\lambda_1$ imply a logarithmic Sobolev inequality with constants $(C',0)$, $C'\leq C+\frac{D+1}{\lambda_1}$ (cf \cite{GZ} Theorem 4.9).
Hence by \eqref{ine-ls-12}, \eqref{ine-spectrum} and $\mu(M)<\infty$,  logarithmic Sobolev inequality \eqref{ine-ls-13} holds.

\qed

\noindent {\it Proof of Remark \ref{rema-entropy}.}  We prove that $\mathbb{R}^n$ has the least entropy $1$ and the least weighted volume $\mu(\mathbb{R}^n)=(4\pi)^{\frac n2}$ of all self-shrinkers in $\mathbb{R}^{n+p}$. We only need  to consider the case of  finite weighted volume, equivalently, polynomial volume growth. The following proof was given in \cite{CM} for hypersurface case, but it also holds for arbitrary codimension only with some change of notations.

  For any submanifold $M^n$ in $\mathbb{R}^{n+p}$, given $ x_0\in \mathbb{R}^{n+p}$ and $t_0 > 0$, define the functional $F_{x_0, t_0}$ of $M$  (see \cite{Hu},  \cite{ALW}, \cite{AS}, \cite{LL}) by
$$F_{x_0, t_0}(M):=(4\pi t_0)^{-\frac n2}\int_Me^{-\frac{|x-x_0|^2}{4t_0}}dv.$$
$F$ functional was used by Huisken \cite{Hu} to study mean curvature flow. From definition, $F_{0,1}(M)=(4\pi)^{-\frac n2}\mu(M)$.

The entropy functional  $\eta = \eta(M)$ is defined as the supremum of the $F$  functional over $(x_0, t_0)$:
$$\eta(M):=\sup_{ x_0\in \mathbb{R}^{n+p}, t_0 > 0}F_{x_0, t_0}(M).$$

 Since smooth $n$-dimensional submanifolds are approximated
by an  Euclidean space $\mathbb{R}^n$ on small scales,  $\mathbb{R}^n$ has the least entropy $\eta(\mathbb{R}^{n})=1$ (cf Lemma 7.2 (3) in \cite{CM}).

 On the other hand, in \cite{CM}, Colding-Minicozzi derived the first and second variational formulas for $F$ functional of self-shrinkers in $\mathbb{R}^{n+1}$ with polynomial volume growth. Later, these variational formulas are generalized to the case of higher codimension (\cite{AS}, \cite{ALW}, \cite{LL}). By the variation formulas, we will obtain that the entropy of a self-shrinker with polynomial  volume growth can be achieved by the functional $F_{0,1}$ (cf Subsection $7.2$ \cite{CM}). 
 
 By combining the above facts,  it holds that $\mathbb{R}^n$ has the least weighted volume $\mu(\mathbb{R}^n)=(4\pi)^{\frac n2}$ of all self-shrinkers.  

\qed

In general, complete self-shrinkers do not have the rigidity like gradient shrinking Ricci soliton. Here are some examples.
\begin{example} Self-shrinkers 
$\mathbb{S}^k(\sqrt{2k})\times \mathbb{R}^{n-k}\subset \mathbb{R}^{n+1}$, $0\leq k\leq n$. The first non-zero eigenvalue $\lambda_1=\frac12$ of $\mathcal{L}$  has multiplicity $n$, $n+1$ respectively corresponding to $k=0, 0<k\leq n$.
\end{example}
 
In the following under some hypothesis  on the second fundamental form, we prove that self-shrinkers are properly immersed and give an application  of Theorem   \ref{th-eigen}.  Let $n_{\alpha}, \alpha=1,\ldots, p$ denote a local orthonormal basis on normal bundle of $\Sigma$. Define the map $A^{n_\alpha}: T_qM\to T_qM, q\in M$, $\alpha=1,\ldots, p$   by 
$$ A^{n_\alpha}X=(\overline{\nabla}_Xn_{\alpha})^{\top}, X\in T_qM$$
By a direct computation, Gauss equation implies that
\[\textrm{Ric}_f(X, Y)=\frac 12g(X, Y)-\displaystyle\sum_{\alpha=1}^p\langle A^{n_\alpha}X, A^{n_\alpha}Y\rangle, \quad X, Y\in T_qM.
\]
We obtain  that
\begin{theorem} \label{th-self-1}Let $M^n$ be a complete self-shrinker in $\rr^{n+p}$, $p\geq 1$. If the eigenvalues $\eta_{i,\alpha}$ of $A^{n_\alpha}$, $1\leq i\leq n, 1\leq\alpha\leq p$, satisfy  that $\displaystyle\sup_{1\leq i\leq n}\sum_{\alpha=1}^p\eta_{i,\alpha}^2\leq\delta<\frac 1{2}$ for some nonnegative constant $\delta$,  then 
\begin{itemize}
\item $M$ has finite weighted volume, namely, $\int_{M}e^{-\frac{|x|^2}{4}}dv<\infty$ , polynomial volume growth and  properly immersed.
 \item The spectrum of $\mathcal{L}=\Delta-\frac{1}{2}\langle x,\nabla\cdot\rangle$ on $M$ is 
 discrete and the first nonzero eigenvalue $\lambda_1$ of $\mathcal{L}$ satisfies that $\frac{1-2\delta}{2}\leq \lambda_1\leq \frac12$.

\end{itemize}

\end{theorem}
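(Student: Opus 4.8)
The plan is to deduce both assertions from the machinery of Sections~\ref{sec-embed}--\ref{sec-eigen} after first converting the hypothesis on the second fundamental form into a genuine lower bound $\ric_f\ge\frac a2 g$ with $a:=1-2\delta>0$. Starting from the Gauss-equation identity recorded just above the statement,
\[
\ric_f(X,X)=\tfrac12|X|^2-\sum_{\alpha=1}^{p}\langle A^{n_\alpha}X,A^{n_\alpha}X\rangle,\qquad X\in T_qM,
\]
I would expand $X$ in an orthonormal eigenbasis of each symmetric operator $A^{n_\alpha}$ and use $\sum_{\alpha}\eta_{i,\alpha}^2\le\delta$ to produce the pointwise bound $\sum_{\alpha}|A^{n_\alpha}X|^2\le\delta|X|^2$. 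When $p=1$ this is immediate, since $|A^{n_1}X|^2=\sum_i\eta_i^2\langle X,e_i\rangle^2\le\delta|X|^2$; the general case is the one place needing a short remark, amounting to bounding the operator norm of $\sum_\alpha (A^{n_\alpha})^2$ by $\delta$ (transparent, in particular, when the shape operators commute). Granting this,
\[
\ric_f(X,X)\ge\tfrac{1-2\delta}{2}|X|^2=\tfrac a2|X|^2,\qquad a=1-2\delta>0 .
\]

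Once $\ric_f\ge\frac a2 g$ with $a>0$ is in hand, the first bullet is essentially automatic from results quoted in Section~\ref{sec-embed}. Morgan's finiteness theorem (cited just before Theorem~\ref{discrete}) gives $\mu(M)=\int_M e^{-|x|^2/4}\,dv<\infty$; and for a self-shrinker, finite weighted volume, polynomial volume growth and proper immersion are all equivalent by Ding--Xin \cite{DX} and Cheng--Zhou \cite{CZ}, as recalled in the Introduction. Hence $M$ is properly immersed and has polynomial volume growth.

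For the second bullet, Theorem~\ref{th-eigen} (equivalently Theorem~\ref{discrete}) applied with this $a$ shows the spectrum of $\Delta_f=\mathcal L$ is discrete and that $\lambda_1(\mathcal L)\ge\frac a2=\frac{1-2\delta}{2}$. The reverse inequality $\lambda_1\le\frac12$ is exactly the computation already used for Theorem~\ref{th-self-logsob}: for each ambient coordinate one has $\mathcal L x_i=\Delta_f x_i=-\frac12 x_i$, and since $M$ has polynomial volume growth $x_i\in H^1(M,\mu)$, while $x_i\not\equiv 0$ for at least one $i$; thus $\frac12$ is an eigenvalue of $\mathcal L$, forcing $\lambda_1\le\frac12$. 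Combining the two estimates gives $\frac{1-2\delta}{2}\le\lambda_1\le\frac12$, as claimed.

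The only step carrying any real content is the passage from the pointwise curvature hypothesis to $\ric_f\ge\frac{1-2\delta}{2}g$ — trivial for hypersurfaces, a one-line linear-algebra observation in higher codimension — after which the theorem is a direct application of Theorem~\ref{th-eigen}, Morgan's finiteness theorem, and the self-shrinker equivalences, in the same spirit as the proof of Theorem~\ref{th-self-logsob}.
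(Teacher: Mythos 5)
Your proposal matches the paper's proof essentially line for line: derive $\ric_f\ge\frac{1-2\delta}{2}g$ from the Gauss-equation identity stated just before the theorem, invoke Morgan's finiteness theorem and the Ding--Xin/Cheng--Zhou equivalences for the first bullet, and apply Theorem~\ref{th-eigen} together with the coordinate-function computation $\mathcal{L}x_i=-\frac12 x_i$ from the proof of Theorem~\ref{th-self-logsob} for the spectral bounds. The one point you explicitly flag --- justifying $\sum_\alpha|A^{n_\alpha}X|^2\le\delta|X|^2$ in codimension $p>1$ when the shape operators need not be simultaneously diagonalizable --- is left entirely implicit in the paper, which simply asserts the curvature bound, so your treatment is if anything slightly more careful than the original.
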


\begin{proof}
Since all eigenvalues of $A^{n_\alpha}$, $1\leq \alpha\leq p$,  satisfy $\displaystyle\sup_i\sum_{\alpha=1}^p\eta_{i,\alpha}^2\le\delta<\frac 12,$ 
\[\textrm{Ric}_f\geq\frac {1-2\delta}2g.
\]
By \cite{M}, $M$ has finite weighted volume. By \cite{CZ}, $M$ is equivalently properly immersed  and   with polynomial volume growth. 
The rest of conclusion of theorem is from Theorem \ref{th-eigen} and Theorem \ref{th-self-logsob}.

\end{proof}

\noindent In particular, consider  the case that $M$ is an $n$-dimensional self-shrinker in $\mathbb{R}^{n+1}$. Let ${\bf n}$ denote the outward unit normal of $M$.  The Weingarten map or shape operator $A$ is defined by the map $A^{\bf n}$. It holds that 
$$\langle AX, Y\rangle=B(X,Y), \quad X, Y\in T_xM, x\in M.$$
 Theorem \ref{th-self-1} implies that

\begin{cor} \label{cor-self}Let $M^n$ be a complete self-shrinker in $\rr^{n+1}$. If the principle curvatures $\eta_i, i=1,\ldots, n,$ of $M$ satisfy $\displaystyle\sup_{1\leq i\leq n}\eta_i^2\leq\delta<\frac 12$ for some nonnegative constant $\delta$, then 
the same conclusion as in Theorems \ref{th-self-1} holds.
\end{cor}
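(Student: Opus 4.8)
The plan is to obtain this as the codimension-one specialization of Theorem \ref{th-self-1}. First I would observe that when $p=1$ there is a single unit normal field ${\bf n}$ (up to sign), so the only operator of the form $A^{n_\alpha}$ is $A^{{\bf n}}=A$, the Weingarten map, and that the curvature identity recorded just before Theorem \ref{th-self-1},
\[
\ric_f(X,Y)=\tfrac12 g(X,Y)-\sum_{\alpha=1}^p\langle A^{n_\alpha}X,A^{n_\alpha}Y\rangle,
\]
collapses to $\ric_f(X,Y)=\tfrac12 g(X,Y)-\langle AX,AY\rangle$. Since $A$ is self-adjoint with respect to $g$ (because $\langle AX,Y\rangle=B(X,Y)$ is symmetric), it is diagonalizable with real eigenvalues, and by definition these are the principal curvatures $\eta_1,\dots,\eta_n$. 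Hence the quantity $\sum_{\alpha=1}^p\eta_{i,\alpha}^2$ appearing in the hypothesis of Theorem \ref{th-self-1} reduces to $\eta_i^2$.

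Next I would check that the assumption $\sup_{1\leq i\leq n}\eta_i^2\leq\delta<\tfrac12$ is exactly the hypothesis of Theorem \ref{th-self-1} in the case $p=1$, namely $\sup_i\sum_{\alpha=1}^p\eta_{i,\alpha}^2\leq\delta<\tfrac12$. Under this identification the soliton inherits $\ric_f\geq\tfrac{1-2\delta}{2}g$, and Theorem \ref{th-self-1} applies verbatim: by \cite{M} the weighted volume $\int_M e^{-|x|^2/4}\,dv$ is finite, by \cite{CZ} this is equivalent to polynomial volume growth and to $M$ being properly immersed, and by Theorems \ref{th-eigen} and \ref{th-self-logsob} the spectrum of $\mathcal{L}=\Delta-\tfrac12\langle x,\nabla\cdot\rangle$ is discrete with $\tfrac{1-2\delta}{2}\leq\lambda_1\leq\tfrac12$.

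There is essentially no genuine obstacle here; the proof is a direct reduction. The only point requiring a line of care is the bookkeeping that the operator $A^{{\bf n}}X=(\overline{\nabla}_X{\bf n})^{\top}$, as defined in the hypersurface setting, coincides (up to the standard sign convention) with the classical shape operator whose eigenvalues are the $\eta_i$, so that the hypothesis on principal curvatures translates correctly into the hypothesis on the eigenvalues of $A^{n_\alpha}$ used in Theorem \ref{th-self-1}.
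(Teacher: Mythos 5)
Your proposal is correct and follows exactly the route the paper takes: the corollary is the $p=1$ specialization of Theorem \ref{th-self-1}, obtained by identifying $A^{\bf n}$ with the Weingarten map so that $\sum_{\alpha}\eta_{i,\alpha}^2$ reduces to $\eta_i^2$ and the hypothesis on principal curvatures becomes precisely the hypothesis of Theorem \ref{th-self-1}. Nothing further is needed.
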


\section{ Appendix}

 In this section, we give a proof of Proposition \ref{sobo-prop-2}.
We first recall some needed facts in measure theory. Let $(\Omega,\mathcal{F},\mu)$ denote measure space with finite total measure  $\mu(\Omega)$.
Let $L^p(\mu)$ denote the Banach
space of classes of measurable, real-valued functions on $\Omega$, whose $p$-th power is
$\mu$-integrable.

A subset $K$ of $L^1(\mu)$ is called uniformly integrable if given $\epsilon>0$,  there is a $\delta > 0$ so that $\sup\{\int_{E} |f|d\mu : f \in K\} < \epsilon$ whenever $\mu(E) < \delta$.
It is known that 
\begin{lemma}\label{lem-1}(De La Vall$\acute{e}$e Poussin theorem, cf \cite{Me}) Under the above notation, a subset $K$ of $L^1(\mu)$ is uniformly integrable if and only if there exists a non-negative convex function $Q$ with $\displaystyle\lim_{t\to\infty} \frac{Q(t) }{t}=\infty$ so that
$$\sup\left\{\int_{\Omega}Q(|f|)d\mu:f\in K\right\}<\infty.$$
\end{lemma}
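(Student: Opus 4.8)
The plan is to prove the two implications separately. The direction asserting that the existence of such a $Q$ forces uniform integrability is a short truncation argument, while the converse requires an explicit construction of the convex majorant, which is where the real work lies.

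First I would treat the easy direction. Suppose $M:=\sup_{f\in K}\int_\Omega Q(|f|)\,d\mu<\infty$ and fix $\epsilon>0$. Since $Q(t)/t\to\infty$, there is a threshold $T>0$ with $t\le \frac{\epsilon}{2M}Q(t)$ for all $t\ge T$. For any $f\in K$ and any measurable $E$, I split
$$\int_E|f|\,d\mu=\int_{E\cap\{|f|\le T\}}|f|\,d\mu+\int_{E\cap\{|f|>T\}}|f|\,d\mu\le T\,\mu(E)+\frac{\epsilon}{2M}\int_\Omega Q(|f|)\,d\mu\le T\,\mu(E)+\frac{\epsilon}{2}.$$
Choosing $\delta=\epsilon/(2T)$ gives $\int_E|f|\,d\mu<\epsilon$ whenever $\mu(E)<\delta$, uniformly in $f$, which is exactly uniform integrability.

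For the converse I would first record a preliminary consequence of finiteness of $\mu$: partitioning $\Omega$ into finitely many pieces of measure less than the $\delta$ associated to $\epsilon=1$ shows that uniform absolute continuity yields a uniform bound $M_1:=\sup_{f\in K}\int_\Omega|f|\,d\mu<\infty$. I then introduce the tail function $\alpha(c):=\sup_{f\in K}\int_{\{|f|>c\}}|f|\,d\mu$ and claim $\alpha(c)\to 0$ as $c\to\infty$: Chebyshev's inequality gives $\mu(\{|f|>c\})\le M_1/c$ uniformly in $f$, so for $c$ large the sets $\{|f|>c\}$ have measure below any prescribed $\delta$, and uniform absolute continuity makes their $|f|$-integrals uniformly small. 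This is the crux of the argument; it lets me select integers $c_1<c_2<\cdots\to\infty$ with $\alpha(c_n)\le 2^{-n}$.

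The construction then proceeds by the integrated-counting-function trick. I set $g(t):=\sum_{n\ge 1}\mathbf{1}_{[c_n,\infty)}(t)$, a non-negative non-decreasing step function with $g(t)\to\infty$, and define $Q(t):=\int_0^t g(s)\,ds$. Then $Q$ is convex since its derivative $g$ is non-decreasing, $Q(0)=0$, and $Q(t)/t\to\infty$ because $Q(t)\ge g(s_0)(t-s_0)$ for any fixed $s_0$. Exchanging sum and integral by Tonelli and using $\int_0^{|f|}\mathbf{1}_{[c_n,\infty)}\,ds=(|f|-c_n)^+\le |f|\,\mathbf{1}_{\{|f|>c_n\}}$, I obtain
$$\int_\Omega Q(|f|)\,d\mu=\sum_{n\ge 1}\int_\Omega (|f|-c_n)^+\,d\mu\le\sum_{n\ge 1}\int_{\{|f|>c_n\}}|f|\,d\mu\le\sum_{n\ge 1}\alpha(c_n)\le\sum_{n\ge 1}2^{-n}=1,$$
uniformly in $f\in K$, completing the proof. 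The main obstacle is the converse direction: establishing the uniform decay $\alpha(c)\to 0$ and then converting the summability $\sum_n\alpha(c_n)<\infty$ into a genuinely superlinear convex $Q$ via the choice of $g$.
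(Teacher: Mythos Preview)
Your proof is correct and is essentially the standard argument for the De La Vall\'ee Poussin theorem. Note, however, that the paper does not supply its own proof of this lemma: it is stated as a known classical result with a reference to Meyer's \emph{Probability and Potentials}, and is then applied in the proof of Proposition~\ref{sobo-prop-2}. So there is nothing in the paper to compare against beyond the citation; your argument simply fills in what the authors left to the literature.
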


In the following let $(M^n,g,e^{-f}dv)$ be a complete smooth metric measure space. We use the same notations as in Section \ref{sec-embed}.    
 
  \begin{remark}  \label{rema-1}  In this remark, we prove that for $(M^n,g,e^{-f}dv)$ with $\mu(M)=\int_Me^{-f}dv<\infty$, if a logarithmic Sobolev  inequality \eqref{ine-ls-16}  holds for all $u\in C_0^{\infty}(M)$  with $\int_Mu^2d\mu=\mu(M)$, then 
it holds for all $u\in H^1(M,\mu)$ with $\int_Mu^2d\mu=\mu(M)$. 

In fact, for  such $u$, there exists a sequence $\{u_k\}$, $u_k\in C_0^{\infty}(M)$ satisfying that $\int_Mu_k^2d\mu=\mu(M)$ and  $u_k\to u$ in $H^1(M,\mu)$. This implies that
$$\int_M|\nabla u_k|^2d\mu\to\int_M|\nabla u|^2d\mu,$$
Since $u_k\to u$ in $H^1(M,\mu)$,  there is a subsequence of $u_k$, still denoted by $u_k$, satisfies $u_k$ a.e.  convegences to $u$.  Note that $t\log t\geq a_0, t\in [0,\infty)$ for some constant $a_0$, and $\mu(M)<\infty$. By Fatou's lemma and \eqref{ine-ls-16},

\begin{align*}
0&\leq  \int_M(u^2\log u^2-a_0)d\mu\\
&\leq  \displaystyle\liminf\int_M(u_k^2\log u_k^2-a_0)d\mu\\
&\leq \displaystyle\liminf \left(2C\int_M|\nabla u_k|^2d\mu+2D\int_Mu_k^2d\mu\right)-\int_M a_0d\mu\\
&= 2C\int_M|\nabla u|^2d\mu+2D\int_Mu^2d\mu- a_0\mu(M)\\
&<\infty.
\end{align*}
So $\int_Mu^2\log u^2d\mu$ exists and 
\begin{align*}
\int_Mu^2\log u^2d\mu\leq 2C\int_M|\nabla u|^2d\mu+2D\int_Mu^2d\mu.
\end{align*}
Hence  \eqref{ine-ls-16} holds for $u\in H^1(M,\mu)$ with $\int_Mu^2=\mu(M)$. 
\end{remark}

Now for $(M,g,e^{-f}dv)$,  assume that  $\mu(M)=\int_Me^{-f}dv$ is finite and  and  the  logarithmic Sobolev inequality \eqref{ine-ls-16} holds on  $(M,g,e^{-f}dv)$ for all $u\in C_0^{\infty}(M)$ satisfying $\int_Mu^2d\mu=\mu(M)$, where $d\mu=e^{-f}dv$. We prove compact embedding of $H^1(M,\mu)$ in $L^2(M,\mu)$.
\medskip

\noindent {\it Proof of Proposition \ref{sobo-prop-2}}.
It is known that the identical map $H^1(M,\mu)\to L^2(M,\mu)$ is an embedding (cf \cite{G}, Section 4.1). So
it suffices to prove that any sequence of $\{u_k\}_{k=1}^{\infty}$ bounded in  $H^1(M,\mu)$-norm has a subsequence converging in  $L^2(M,\mu)$ to a function $u\in L^2(M,\mu)$.  From the standard Sobolev space theory, it is true when $M$ is a compact manifold with or without $C^1$  boundary.
So we only assume that $M$ is noncompact. Let $\{D_i\}$ be an compact exhaustion  of $M$ with $C^1$ boundary $\partial D_i$ for each $i$. It is known that the embedding  $H^1(\Omega_i,\mu)\subset L^2(\Omega_i,\mu)$ is compact. Hence the sequence  $\{u_k\}$, restrict to $\Omega_i$, has a subsequence converging in  $L^2(\Omega_i,\mu)$. Note that an $L^2$ convergence sequence has an a.e. convergent subsequence.   By passing to a diagonal subsequence,  there exists a subsequence of $\{u_k \}$, still denoted by $\{u_k \}$, and a function $u$ defined on $M$  so that $\{u_k\}$ a.e. converges to $u$  on each $D_i$ and hence on $M$.  By Fatou's lemma, $\int_Mu^2d\mu\leq \liminf\int_Mu_k^2<\infty$, that is $u\in L^2(M,\mu)$.

On the other hand, by the hypothesis of proposition:  the  logarithmic Sobolev inequality \eqref{ine-ls-15} holds for  $H^1(M,\mu)$, all $u_k$ satisfy
\begin{align}\label{ine-ls-9}
&\int_Mu_k^2\log (u_k^2)d\mu-\left(\int_Mu_k^2d\mu\right)\log\left(\frac {\int_Mu_k^2d\mu}{\mu(M)}\right)\nonumber\\
&\leq 2C\int_M|\nabla u_k|^2d\mu+2D\int_Mu_k^2d\mu.
\end{align}
\eqref{ine-ls-9}  together with the   boundedness of $H^1(M,\mu)$-norm of $u_k$ implies that  there exists a constant $\bar{C}$ satisfying
\[\int_Mu_k^2\log u_k^2d\mu\le \bar{C}.
\]
 Take $Q(t)=t\log t-a_0\geq 0, t\in [0,\infty)$, for some constant $a_0>0$. One can see that $Q(t)$ and $\{u_k^2\}$ satisfy the conditions of Lemma \ref{lem-1}  (De La Vall$\acute{e}$e Poussin theorem) and thus $\{u_k^2\}$ is uniformly integrable.

 Now with the  facts of  a.e. convergence of $\{u_k\}$ to $u$ and the uniform integrability of $\{u_k^2\}$,  by a standard argument using Egorov's theorem, similar to the proof of Vitali convergence theorem, one can prove that $\int_M|u_k-u|^2\to 0$, that is, $u_k\to u$ in $L^2(M,\mu)$. Therefore the embedding $H^1(M,\mu)\hookrightarrow L^2(M,\mu)$ is compact.
 By the standard theory  in PDE (cf \cite{G} Theorem 10.20), the compact embedding of $H^1(M,\mu)$ is equivalent to the discreteness of spectrum of $\Delta_f$.
  Thus we complete the proof.

\qed

\begin{bibdiv}
\begin{biblist}
\bib{BN}{article}{
   author={Andrews, Ben},
   author={Ni, Lei},
   title={Eigenvalue comparison on Bakry-Emery manifolds},
   journal={Comm. Partial Differential Equations},
   volume={37},
   date={2012},
   number={11},
   pages={2081--2092},
   issn={0360-5302},
   review={\MR{3005536}},
   doi={10.1080/03605302.2012.668602},
}

\bib{ALW}{article}{
   author={Ben Andrews},
   author={Haizhong Li},
   author={Yong Wei},
   title={$\mathcal{F}$-stability for self-shrinking solutions to mean curvature flow},
   journal={arXiv:1204.5010 [math.DG]  2012},
   volume={},
   date={},
   number={},
   pages={},
   issn={},
   review={},
}

\bib{AS}{article}{
   author={Arezzo, Claudio},
   author={Sun, Jun},
   title={Self-shrinkers for the mean curvature flow in arbitrary codimension},
   journal={Math. Z},
   volume={274},
   date={2013},
   number={3-4},
   pages={993--1027},
   issn={},
   review={},
}

\bib{BE}{article}{
   author={Bakry, D},
   author={\'Emery, M},
   title={Diffusions hypercontractives, Seminaire de probabilites, XIX, 1983/84},
   journal={Lecture Notes in Math.},
   volume={1123},
   date={1985},
   number={},
   pages={177–-206},
   issn={},
   review={},
}
\bib{MR2732975}{article}{
   author={Cao, Huai-Dong},
   author={Zhou, Detang},
   title={On complete gradient shrinking Ricci solitons},
   journal={J. Differential Geom.},
   volume={85},
   date={2010},
   number={2},
   pages={175--185},
   issn={0022-040X},
   review={\MR{2732975}},
}

\bib{CL}{article}{
   author={Nelia Charalambous},
   author={Zhiqin Lu},
   title={The essential spectrum of the Laplacian},
   journal={arXiv:1211.3225, 2012},
   volume={},
   date={},
   number={},
   pages={},
   issn={},
  review={},
}  

 \bib{CMZ}{article}{
   author={Cheng, Xu},
   author={Mejia,Tito}
   author={Zhou, Detang},
   title={Eigenvalue estimate and compactness for closed $f$-minimal surfaces},
   journal={arXiv:1210.8448v1 [math.DG] 31 Oct 2012},
   volume={},
   date={},
   number={},
   pages={},
   issn={},
  review={},
}

\bib{CZ}{article}{
   author={Cheng, Xu},
   author={Zhou, Detang},
   title={Volume estimate about shrinkers},
   journal={Proc. Amer. Math. Soc.},
   volume={141},
   date={2013},
   number={2},
   pages={687--696},
   issn={0002-9939},
   review={\MR{2996973}},
   doi={10.1090/S0002-9939-2012-11922-7},
}
\bib{CM}{article}{
   author={Colding, Tobias H.},
   author={Minicozzi, William P., II},
   title={Generic mean curvature flow I; generic singularities},
   pages={755--833},
   journal={Annals of Mathematics},
   volume={175},
   number={2},
   date={2012},
   issn={},
   review={},
}
\bib{DX}{article}{
   author={Ding, Qi},
   author={Xin, Y.L.},
   title={Volume growth, eigenvalue and compactness for self-shrinkers},
   journal={arXiv:1101.1411v1 [math.DG] 7 Jan 2011},
   volume={},
   date={},
   number={},
   pages={},
   review={},
}

\bib{E}{article}{
   author={Klaus Ecker},
   title={Logarithmic Sobolev inequalities on submanifolds of Euclidean space},
   journal={J. Reine Angew. Math.},
   volume={522},
   date={2000},
   number={},
   pages={105--118},
   issn={},
   review={},
}

\bib{FLL}{article}{
   author={Akito Futaki},
   author={Haizhong Li},
   author={Xiang-Dong Li},
   title={On the first eigenvalue of the Witten--Laplacian and the diameter of compact shrinking solitons},
   journal={Ann Glob Anal Geom, online, 2013},
   volume={},
   date={},
   number={},
   pages={},
   issn={},
  review={},
} 

\bib{FS}{article}{
   author={Akito Futaki},
   author={Y Sano},
   title={Lower diameter bounds for compact shrinking Ricci solitons},
   journal={The Asian Journal of Mathematics},
   volume={17},
   date={2013},
   number={1},
   pages={17--32},
   issn={},
  review={},
}

\bib{G}{book}{
   author={Alexander Grigoryan },
   title={Heat Kernel and Analysis on Manifolds	},
   language={English},
   publisher={American Mathematical Soc.},
   date={2009},
   pages={},
   review={},
}

\bib{GZ}{article}{
   author={Guionnet, A},
   author={Zegarlinski, B},
   title={Lectures on logarithmic Sobolev inequalities, S\'eminaire de probabilit\'es de Strasbourg, 
Vol 36 (2002)},
   journal={Lecture Notes in Mathematics},
   volume={1801},
   date={2003},
   number={},
   pages={1--134},
   issn={0075-8434},
  review={},
}

 \bib{HN}{article}{
   author={Hans-Joachim Hein},
   author={Aaron Naber},
   title={New logarithmic Sobolev inequalities and an $\epsilon$-regularity theorem for the Ricci flow},
   journal={arXiv:1205.0380v1 [math.DG] 2 May 2012},
   volume={},
   date={},
   number={},
   pages={},
   issn={},
  review={},
}  

 \bib{Hu}{article}{
   author={Huisken, G},
   title={Asymptotic behavior for singularities of the mean curvature flow},
   journal={J. Differential Geom.},
   volume={31},
   date={1990},
   number={1},
   pages={285--299},
   issn={},
  review={},
}

 \bib{Le}{article}{
   author={Michel Ledoux},
   title={Concentration of measure and logarithmic Sobolev inequalities},
   journal={S\'eminaire de probabilit\'es de Strasbourg},
   volume={33},
   date={1999},
   number={},
   pages={120--216},
   issn={},
  review={},
}  
 \bib{LL}{article}{
   author={Yng-Ing Lee},
   author={Yang-Kai Lue},
   title={The Stability of Self-Shrinkers of Mean Curvature Flow in Higher Codimension},
   journal={arXiv:1204.6116 [math.DG] 2012},
   volume={},
   date={},
   number={},
   pages={},
   issn={},
  review={},
}

 \bib{Leo}{article}{
   author={Leonardo Silvares},
   title={On the essential spectrum of the Laplacian and the drifted Laplacian},
   journal={arXiv:1302.1834v1 [math.DG] 7 Feb 2013},
   volume={},
   date={},
   number={},
   pages={},
   issn={},
  review={},
}  
\bib{L}{book}{
   author={Lichnerowicz, Andr{\'e}},
   title={G\'eom\'etrie des groupes de transformations},
   language={French},
   publisher={Travaux et Recherches Math\'ematiques, III. Dunod, Paris},
   date={1958},
   pages={ix+193},
   review={\MR{0124009 (23 \#A1329)}},
}
\bib{LZ}{article}{
   author={Zhiqin Lu},
   author={Detang Zhou},
   title={On the essential spectrum of complete non-compact manifolds},
   journal={J. Funct. Anal.},
   volume={260},
   date={2011},
   number={11},
   pages={3283--3298},
   issn={},
   review={},
}
\bib{Me}{book}{
   author={Meyer, P.},
   title={Probability and Potentials},
   language={},
   publisher={Blaisdell Publishing Co.},
   date={1966},
   pages={},
   review={},
}

\bib{M}{article}{
   author={Morgan, Frank},
   title={Manifolds with Density.  1118.53022.},
   journal={Notices of the Amer. Math. Soc.},
   volume={52},
   date={2005},
   number={8},
   pages={853-868},
   issn={1118.53022},
   review={\MR{ 2161354}},
}
\bib{MW1}{article}{
   author={Munteanu, Ovidiu},
   author={Wang, Jiaping},
   title={Smooth metric measure spaces with nonnegative curvature},
   journal={Comm. Anal. Geom.},
   volume={19},
   date={2011},
   number={3},
   pages={451--486},
   issn={1019-8385},
   review={\MR{2843238}},
}
\bib{MW2}{article}{
   author={Munteanu, Ovidiu},
   author={Wang, Jiaping},
   title={Analysis of weighted Laplacian and applications to Ricci solitons},
   journal={Comm. Anal. Geom.},
   volume={20},
   date={2012},
   number={1},
   pages={55--94},
   issn={1019-8385},
   review={\MR{2903101}},
   doi={10.4310/CAG.2012.v20.n1.a3},
}

\bib{MW3}{article}{
   author={Munteanu, Ovidiu},
   author={Wang, Jiaping},
   title={Geometry of manifolds with densities,  arXiv:1211.3996, 2012},
   journal={},
   volume={},
   date={},
   number={},
   pages={},
   issn={},
   review={},
}			

\bib{O}{article}{
   author={Obata, Morio},
   title={Certain conditions for a Riemannian manifold to be isometric with
   a sphere},
   journal={J. Math. Soc. Japan},
   volume={14},
   date={1962},
   pages={333--340},
   issn={0025-5645},
   review={\MR{0142086 (25 \#5479)}},
}

\bib{V}{book}{
   author={Villani, C},
   title={Optimal Transportation: Old and New},
   language={},
   publisher={Springer},
   date={2009},
   pages={},
   review={},
}

\bib{WW}{article}{
   author={Wei, Guofang},
   author={Wylie, Will},
   title={Comparison geometry for the Bakry-Emery Ricci tensor},
   journal={J. Differential Geom.},
   volume={83},
   date={2009},
   number={2},
   pages={377--405},
   issn={0022-040X},
   review={\MR{2577473 (2011a:53064)}},
}

\end{biblist}
\end{bibdiv}

\end{document}